\documentclass[11pt]{amsart}
\usepackage{amscd}
\usepackage{amssymb}

\theoremstyle{plain} 
\newtheorem{theorem}{Theorem}[section]
\newtheorem{proposition}[theorem]{Proposition}
\newtheorem{lemma}[theorem]{Lemma}
\newtheorem{corollary}[theorem]{Corollary} 
\newtheorem{question}[theorem]{Question}

\theoremstyle{remark}
\newtheorem{remark}[theorem]{Remark}
\newtheorem{example}[theorem]{Example}

\theoremstyle{definition}
\newtheorem{definition}[theorem]{Definition}

\DeclareMathOperator{\Rat}{Rat}
\DeclareMathOperator{\Spec}{Spec}
\DeclareMathOperator{\Orb}{Orb}

\DeclareMathOperator{\Gal}{Gal}

\DeclareMathOperator{\Per}{Per}
\DeclareMathOperator{\Frob}{Frob}
\newcommand{\fp}{ {\mathfrak p} }
\newcommand{\fc}{ {\mathfrak c }}
\newcommand{\fm}{ {\mathfrak m} }
\newcommand{\fq}{ {\mathfrak q} }

\newcommand{\fo}{ {\mathfrak o} }

\newcommand{\cO}{ {\mathcal O} }
\newcommand{\cS}{ {\mathcal S} }
\newcommand{\cP}{ {\mathcal P} }
\newcommand{\cU}{ {\mathcal U} }

\newcommand{\lra} {\longrightarrow}
\newcommand{\bZ} { {\mathbb Z}} 
 
\newcommand{\bA} { {\mathbb A}} 
\newcommand{\F} { {\mathbb F}} 
\newcommand{\bP} { {\mathbb P}} 
\newcommand{\bQ} { {\mathbb Q}} 
\newcommand{\bC} { {\mathbb C}}
\newcommand{\tp}{ {\tilde \varphi}}
\DeclareMathOperator{\res}{Res}
\DeclareMathOperator{\FPP}{FPP}
\DeclareMathOperator{\Fix}{Fix}
\DeclareMathOperator{\Norm}{N}

\begin{document}
\title{Wreath products and proportions of periodic points}

\author{J.~Juul}
\address{
Jamie Juul\\
Department of Mathematics\\
University of Rochester\\
Rochester, NY 14627\\
USA
}
\email{jamie.l.rahr@gmail.com}

\author {P.~Kurlberg}
\address{
P\"ar Kurlberg\\
Department of Mathematics\\
KTH\\
SE-100 44 Stockholm\\
Sweden}
\email{kurlberg@math.kth.se}

\author{K.~Madhu}
\address{
Kalyani Madhu\\
Department of Mathematics\\
University of Rochester\\
Rochester, NY 14627\\
USA
}
\email{kalyanikmadhu@gmail.com}

\author{T.~J.~Tucker}
\address{
Thomas Tucker\\
Department of Mathematics\\
University of Rochester\\
Rochester, NY 14627\\
USA
}
\email{tjtucker@gmail.com}

\begin{abstract}
Let $\varphi: \bP^1 \lra \bP^1$ be a rational map of degree greater
than one defined over a number field $k$.  For each prime $\fp$ of
good reduction for $\varphi$, we let $\varphi_\fp$ denote the
reduction of $\varphi$ modulo $\fp$.  A random map heuristic suggests
that for large $\fp$, the proportion of periodic points of
$\varphi_\fp$ in $\bP^1(\fo_k/\fp)$ should be small.  We show that
this is indeed the case for many rational functions $\varphi$.  
\end{abstract}

\maketitle

\section{Introduction}
Let $f$ be a polynomial in $\bZ[x]$ of degree greater than 1.  Then $f$
induces a map $f_p: \F_p \lra \F_p$  for each prime $p$ via reduction
modulo $p$.  
Any point $\alpha \in  \F_p$ will be {\it preperiodic} under $f_p$; the
fact that $\F_p$ is finite means that there must be some $i \not= j$
such that $f_p^i(\alpha) = f_p^j(\alpha)$. On the other hand, $\alpha$
may not be \textit{periodic}, since it is quite possible that there is no $n >
0$ such that $f_p^n(\alpha) = \alpha$.  

The model of random maps, along with the heuristic of the birthday
problem, suggests that for a typical $\alpha$ and a typical $f_p$, the
size of the orbit $\Orb_{f_p}(\alpha) = \{ \alpha, f(\alpha), \dots,
f^m(\alpha), \dots \}$ will be about $\sqrt{p}$ (see \cite{FO, Bach,
  Silver, BGH}).  Hence, one might guess that there is about a
$1/\sqrt{p}$ chance that a given $\alpha$ is $f_p$-periodic, and that
the proportion of $f_p$-periodic points in $\F_p$ is about $1/\sqrt{p}$.
In particular, one would then have 
\[ \lim_{p \to \infty} \frac{\#\Per(f_p)}{p} = 0, \] where $\Per(f_p)$
is the set of points in $\F_p$ that are $f_p$-periodic.  

More generally, one might consider this problem for rational functions
over number fields.  Let $k$ be a number field and let $\varphi \in
k(x)$ be a rational function of degree greater than one.  For all but
finitely many primes $\fp$ in the ring of integers $\fo_k$ of $k$,
reducing modulo $\fp$ gives rise to a well-defined map $\varphi_\fp: \bP^1(\fo_k /
\fp) \lra   \bP^1(\fo_k /
\fp)$.  We let $\Norm(\fp)$ denote the number of elements in the
residue field $\fo_k / \fp$.  Then one might expect for a typical
$\varphi$, taking the limit over the $\fp$ such that $\varphi_\fp$ is a
 well-defined map on $(\fo_k /
\fp)$, one should have
\begin{equation}\label{eq1}  \lim_{\Norm(\fp) \to \infty} \frac{\#\Per(\varphi_\fp)}{\Norm(\fp)
  + 1} = 0.
\end{equation}

Of course, this might not necessarily be the case.  For example, if
$f(x)$ is a powering map $f(x) = x^n$, then $f_p$ is a bijection for
all $p \not\equiv 1 \pmod{n}$ and thus all points in $\F_p$ are
$f_p$-periodic for all $p \not\equiv 1 \pmod{n}$.  A more general
family of examples comes from Dickson polynomials, which are defined
by $f(x + a/x) = x^n + (a/x)^n$ (when $a=0$, one has a powering map).
Fried \cite{Fried} showed that if $f$ is any polynomial over a number
field $k$ such that $f_\fp$ is a bijection for infinitely many primes
$\fp$ in $\fo_k$, then $f$ can be a written as a composition of
Dickson polynomials and linear polynomials (polynomials of the form
$ax + b)$.  More recently, Guralnick, M\"uller, and Saxl \cite{GMS}
have given a classification of all indecomposable rational functions
$\varphi$ over number fields such that $\varphi_\fp$ is a bijection
for infinitely many primes $\fp$; the classification is substantially
more complicated.  The rational functions classified by Guralnick,
M\"uller, and Saxl are often referred to as indecomposable {\it exceptional rational
  functions} (for a more general discussion of exceptional maps, see
\cite{GTZ}).

\begin{question}\label{q1}
  Let $k$ be a number field.  Can one classify all rational functions
  $\varphi \in k(x)$ of degree greater than one over a number field
  $k$ such that \eqref{eq1} fails to hold?  
\end{question}

It is possible that all rational functions such that \eqref{eq1} fails
to hold come from exceptional rational functions, but we are not able
to prove it at the present time. However, we have some evidence that
this may be the case.  We can show that for ``most'' rational functions
$\varphi$ of degree $d$, the proportion
$\#\Per(\varphi_\fp)/(\Norm(\fp) + 1)$ becomes small for large
$\Norm(\fp)$. To phrase this precisely, we need a bit more notation.

Let $k$ be a number field.  Given a point $(a_0,\dots, a_d, b_0,
\dots, b_d)$ in $\bA^{2d+2}(\bar{k})$, we set ${\vec a} = (a_0,\dots, a_d)$, ${\vec
  b} = (b_0,\dots,b_d)$, $p_{\vec a} = a_d x^d + \dots + a_0$,
$q_{\vec b} = b_d x^d + \dots + b_0$, and $\varphi_{ {\vec a}, {\vec
    b}} = p_{\vec a}/ q_{\vec b}$.  If the resultant of $p$ and $q$ is
nonzero and either $a_d$ or $b_d$ is nonzero, then $\varphi_{ {\vec
    a}, {\vec b}} = p_{\vec a}/ q_{\vec b}$ is a rational function of
degree $d$ in $k(x)$.  We denote the set of such $({\vec a}, {\vec b})$ as
$\Rat_d$. 
 
\begin{theorem}\label{generic}
Let $\epsilon > 0$ and $d > 1$.  With notation as above, there is a
Zariski dense open subset $U_{d,\epsilon}$ of $\Rat_d$ such that for any number field
$k$ and any $({\vec a},
{\vec b}) \in \Rat_d$, we have 
\[ \lim_{\substack{\Norm(\fp) \to \infty \\ \text{primes $\fp$ of
    $\fo_k$}}}  \frac{\#\Per(\varphi_{{\vec a}, {\vec b}})}{\Norm(\fp) +
  1} \leq \epsilon.\]  
\end{theorem}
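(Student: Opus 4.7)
The plan is to reduce the problem to bounding the image of a high iterate, then analyze that image through the Galois theory of the covering $\varphi^n\colon \bP^1\to\bP^1$. The starting observation is that $\Per(\varphi_\fp) \subseteq \varphi_\fp^n(\bP^1(\fo_k/\fp))$ for every $n\geq 1$, since any periodic point lies in its own forward orbit. Hence it suffices, for a well-chosen $n=n(\epsilon)$, to show that the density of $\varphi_\fp^n(\bP^1(\fo_k/\fp))$ in $\bP^1(\fo_k/\fp)$ stays below $\epsilon$ in the limit. A point $\alpha$ belongs to this image exactly when $\varphi^n(x)=\alpha$ has a root in the residue field, i.e., when the Frobenius element of the fiber over $\alpha$, viewed in the monodromy group $G_n(\varphi)\subseteq S_{d^n}$ of the Galois closure of $k(x)/k(\varphi^n(x))$, fixes at least one of the $d^n$ geometric preimages.

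I would then combine Chebotarev/Weil with a wreath product analysis. By the Chebotarev density theorem for covers of curves over finite fields (equivalently, Weil/Deligne bounds), as $\Norm(\fp)\to\infty$ the Frobenius class at a uniformly chosen $\alpha\in\bP^1(\fo_k/\fp)$ equidistributes in $G_n(\varphi)$, giving
\[
\limsup_{\Norm(\fp)\to\infty}\frac{\#\varphi_\fp^n(\bP^1(\fo_k/\fp))}{\Norm(\fp)+1}\leq \rho(G_n(\varphi)),
\qquad \rho(G):=\frac{\#\{g\in G: g\text{ has a fixed point}\}}{\#G}.
\]
Because $\varphi^n$ is the $n$-fold composition of $\varphi$, the group $G_n(\varphi)$ embeds in the iterated wreath product $W_n:=S_d\wr S_d\wr\cdots\wr S_d$ ($n$ factors), acting on the $d^n$ leaves of the $d$-ary rooted tree of depth $n$. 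I would show that for $(\vec a,\vec b)$ in a Zariski dense open subset $U_n\subset\Rat_d$ one has equality $G_n(\varphi)=W_n$; this is a finite-level statement, achieved by exhibiting a single explicit rational function (e.g., over $\bQ$) with the full Galois group and then noting that ``$G_n=W_n$'' is a Zariski open condition, cut out by non-vanishing of iterated resolvents.

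Finally, the wreath structure yields a recursion: an element $(\sigma;g_1,\ldots,g_d)\in W_{n+1}=S_d\ltimes W_n^d$ fixes a leaf starting with index $i$ iff $\sigma(i)=i$ and $g_i$ fixes a leaf of the corresponding subtree, whence
\[
1-\rho(W_{n+1})=\mathbb{E}_{\sigma\in S_d}\bigl[(1-\rho(W_n))^{F(\sigma)}\bigr],
\]
where $F(\sigma)$ counts fixed points of $\sigma$. Using $\mathbb{E}[F]=1$ and $\mathbb{E}[\binom{F}{2}]=1/2$ for uniform $\sigma\in S_d$, Taylor expansion gives $\rho(W_{n+1})=\rho(W_n)-\tfrac{1}{2}\rho(W_n)^2+O(\rho(W_n)^3)$, so $\rho(W_n)\to 0$. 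Choosing $n$ with $\rho(W_n)<\epsilon$ and setting $U_{d,\epsilon}:=U_n$ completes the argument. The principal obstacle is the middle step: verifying that $G_n(\varphi)=W_n$ on a Zariski dense open subset of $\Rat_d$. At each fixed $n$ this is far more tractable than full arboreal surjectivity, but it still requires producing a witnessing example and checking that Zariski openness (via semi-continuity of monodromy) propagates the property; the uniformity in $k$ then follows automatically because the condition is defined geometrically over $\Spec\bZ$.
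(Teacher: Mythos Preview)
Your plan is correct and matches the paper's proof: both bound $\Per(\varphi_\fp)$ by $\varphi_\fp^n(\bP^1)$, invoke function-field Chebotarev to control the image density by $\FPP(G_n)$, show $G_n=[S_d]^n$ on a Zariski dense open set via a witness-plus-openness argument (the paper's witness is Odoni's generic monic polynomial $x^d+c_{d-1}x^{d-1}+\cdots+c_0$, and openness comes from a specialization-of-Galois-groups result), and then use $\FPP([S_d]^n)\to 0$---your explicit recursion is precisely the content of Odoni's Lemma~4.3. The one point you leave implicit is that both the Chebotarev bound and the specialization-openness step require the cover to be geometrically connected (i.e., $k$ algebraically closed in $K_n$); the paper secures this by observing that if the constant field grew then $\Gal(K_n/k(t))$ would be a \emph{proper} subgroup of $[S_d]^n$ for $n\geq 2$, contradicting $G_n=[S_d]^n$.
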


We are also able to prove the following.  

\begin{theorem}\label{collide}
Let $\varphi$ be a rational function of degree $d > 1$ such that for
any two distinct critical points $\alpha_1, \alpha_2$ of $\varphi$ and
any positive integers $m$ and $n$, we have $\varphi^m(\alpha_1) \not=
\varphi^n(\alpha_2)$ unless $m=n$ and $\alpha_1 = \alpha_2$.  
Then 
\begin{enumerate}
\item[(a)] \[ \liminf_{\fp \to \infty} \frac{\#\Per(f_\fp)}{\Norm(\fp) + 1}
  = 0;\]
\item[(b)] if $k$ is algebraically closed in $K_1$, we have
 \[\lim_{\fp \to \infty} \frac{\#\Per(f_\fp)}{\Norm(\fp) + 1}
  = 0.\]
\end{enumerate}
\end{theorem}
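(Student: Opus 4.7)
The plan is to combine three ingredients: (i) the structure of the arboreal Galois group of the iterated preimages of $\varphi$ as a wreath product, (ii) a Galton--Watson style recursion that forces the proportion of wreath-product elements fixing at least one leaf of the preimage tree to tend to $0$, and (iii) a function-field Chebotarev equidistribution that translates the Galois-theoretic proportion into the fraction on $\bP^1(\fo_k/\fp)$ we want to bound.

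\textbf{Arboreal setup.} I will let $t$ be transcendental over $k$ and take $K_n$ to be the splitting field of $\varphi^n(x)-t$ over $k(t)$, so that the $K_1$ of the statement is the $n=1$ case. The iterated preimages $\varphi^{-n}(t)$ are the leaves of a rooted complete $d$-ary tree $T_n$, and both $G_n^{\mathrm{arith}}:=\Gal(K_n/k(t))$ and $G_n^{\mathrm{geo}}:=\Gal(K_n\bar k/\bar k(t))$ embed into the $n$-fold iterated wreath product $[S_d]^n$. The hypothesis of the theorem --- disjoint, non-colliding critical orbits --- is exactly the combinatorial genericity condition for a classical level-by-level induction, using $\Crit(\varphi^n)=\bigcup_{i<n}\varphi^{-i}(\Crit(\varphi))$ together with Riemann--Hurwitz, to conclude $G_n^{\mathrm{geo}}=[S_d]^n$ for every $n\ge 1$.

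\textbf{Galton--Watson step.} Let $\pi_n$ be the proportion of $g\in[S_d]^n$ fixing at least one leaf of $T_n$, and set $q_n:=1-\pi_n$. Writing a wreath element as $(g_1,\dots,g_d;\sigma)$ with $\sigma\in S_d$ and $g_j\in[S_d]^{n-1}$, an element has no fixed leaf iff for every index $j$ with $\sigma(j)=j$ the subtree element $g_j$ has no fixed leaf, and this gives
\[
q_n \;=\; f(q_{n-1}), \qquad f(x)\;:=\;E_{\sigma\in S_d}\bigl[x^{\Fix(\sigma)}\bigr].
\]
Routine computations give $f(1)=1$, $f'(1)=E[\Fix(\sigma)]=1$, and $f''(1)=E[\Fix(\sigma)(\Fix(\sigma)-1)]=1>0$, so the power series $f$ is strictly convex on $[0,1]$ with tangent line $y=x$ at $x=1$; hence $f(x)>x$ on $[0,1)$, the iterates $q_n$ increase monotonically to the unique fixed point $q=1$, and $\pi_n\to 0$.

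\textbf{Chebotarev step and conclusion.} Every $\varphi_\fp$-periodic point lies in the image of $\varphi_\fp^n$ for every $n$, so
\[
\#\Per(\varphi_\fp) \;\le\; \#\bigl\{\alpha\in\bP^1(\fo_k/\fp)\,:\,(\varphi^n)^{-1}(\alpha)\cap\bP^1(\fo_k/\fp)\neq\emptyset\bigr\}.
\]
For $\alpha$ outside the finitely many bad points of $\varphi^n\bmod\fp$, existence of such a preimage is equivalent to the Frobenius at $\alpha$ --- an element of $G_n^{\mathrm{arith}}$ in the coset $\Frob_\fp\cdot G_n^{\mathrm{geo}}$, well-defined up to conjugacy --- fixing at least one leaf of $T_n$. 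I will then apply a function-field Chebotarev to the Galois cover defined by $K_n/k(t)$ (Lang--Weil, or Deligne equidistribution) to obtain, for each fixed $n$,
\[
\frac{\#\Per(\varphi_\fp)}{\Norm(\fp)+1} \;\le\; \frac{\#\{g\in \Frob_\fp\cdot G_n^{\mathrm{geo}}:g\text{ fixes a leaf of }T_n\}}{|G_n^{\mathrm{geo}}|}\;+\;O_n\!\bigl(\Norm(\fp)^{-1/2}\bigr).
\]
For part (a), Chebotarev in $\Gal(\bar k/k)$ supplies, for each $n$, infinitely many $\fp$ with $\Frob_\fp$ trivial in $G_n^{\mathrm{arith}}/G_n^{\mathrm{geo}}$; along these, the right-hand side is $\pi_n+o(1)$, so sending $\Norm(\fp)\to\infty$ and then $n\to\infty$ yields $\liminf=0$. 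For part (b), the hypothesis that $k$ is algebraically closed in $K_1$ forces $G_1^{\mathrm{arith}}=G_1^{\mathrm{geo}}$, and I expect this equality to propagate to $G_n^{\mathrm{arith}}=G_n^{\mathrm{geo}}$ for every $n$, making the displayed bound uniform in $\fp$ and yielding the full limit $0$.

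\textbf{Main obstacle.} The most delicate point is the propagation of arithmetic/geometric coincidence needed for part (b): showing that algebraic closedness at level $1$ survives at every level requires controlling the constant subfield of $K_n$ and ruling out new cyclotomic-type extensions from appearing at higher levels of the wreath tower. Verifying that $G_n^{\mathrm{geo}}$ really is the full $[S_d]^n$ under only the combinatorial disjoint-orbit hypothesis will also require care, but in both cases the rigidity of the wreath-product structure should be enough to carry the argument through.
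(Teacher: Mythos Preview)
Your overall architecture---wreath-product structure for $G_n$, $\FPP\to 0$, then function-field Chebotarev to bound $\#\varphi_\fp^n(\bP^1(\F_q))$---is exactly the paper's. But there is one genuine error and one gap you yourself flag.

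\textbf{The error.} You assert that the disjoint-critical-orbit hypothesis forces $G_n^{\mathrm{geo}}=[S_d]^n$. It does not: nothing in the hypothesis constrains the monodromy group $G:=G_1^{\mathrm{geo}}$ of the single cover $\varphi\colon\bP^1\to\bP^1$, which can perfectly well be a proper transitive subgroup of $S_d$. What the level-by-level ramification argument actually yields---and what the paper proves as its Theorem~3.1, taking $S=\varphi_{\fc}$ and using Riemann--Hurwitz to see that inertia at the critical values already generates $G$---is $G_n^{\mathrm{geo}}=[G]^n$, not $[S_d]^n$. Fortunately your Galton--Watson recursion survives this correction verbatim: with $f_G(x):=|G|^{-1}\sum_{\sigma\in G}x^{\Fix\sigma}$ one still has $f_G(1)=1$, $f_G'(1)=1$ by Burnside (transitivity), and $f_G''>0$ on $(0,1]$ since the identity contributes $x^d$ with $d\ge 2$; hence $q_n=f_G(q_{n-1})\uparrow 1$ and $\FPP([G]^n)\to 0$. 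This is Odoni's Lemma~4.3, which the paper simply quotes.

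\textbf{The gap you flag.} The propagation $G_1^{\mathrm{arith}}=G_1^{\mathrm{geo}}\Rightarrow G_n^{\mathrm{arith}}=G_n^{\mathrm{geo}}$ needed for (b) is not an obstacle once you have the correct wreath target. The same embedding argument (Lemma~2.4 in the paper) run over $k(t)$ gives $G_n^{\mathrm{arith}}\hookrightarrow [G_1^{\mathrm{arith}}]^n$; if $k$ is algebraically closed in $K_1$ then $G_1^{\mathrm{arith}}=G$, so $G_n^{\mathrm{arith}}\hookrightarrow [G]^n$. Since $G_n^{\mathrm{geo}}=[G]^n$ sits inside $G_n^{\mathrm{arith}}$, the sandwich $[G]^n\le G_n^{\mathrm{arith}}\le [G]^n$ forces equality, i.e.\ $k$ is algebraically closed in $K_n$ for every $n$. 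No new constants can appear precisely because the geometric group is already maximal in the allowed wreath product. This is how the paper disposes of (b): over $E=k$ it gets $G_n\cong[G]^n$ directly from Theorem~3.1 and then invokes Corollary~5.4.
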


Theorem~\ref{collide} thus shows that there are essentially only two
obstacles to showing that \eqref{eq1} holds for a given rational
function $\varphi$: (i) intersections between the orbits of the
critical points of $\varphi$ and (ii) nontrivial algebraic extensions
of the ground field $k$ occurring in the splitting field for
$\varphi(x) - t$ over $k(t)$.   To some extent, one can overcome the second problem by
passing to an extension of $k$ and asking instead that
\begin{equation}\label{eq2}  \liminf_{\Norm(\fp) \to \infty} \frac{\#\Per(\varphi_\fp)}{\Norm(\fp)
  + 1} = 0.
\end{equation}

\begin{question}\label{q2}
  Let $k$ be a number field.  Can one classify all rational functions
  $\varphi \in k(x)$ of degree greater than one over a number field
  $k$ such that \eqref{eq2} fails to hold?  
\end{question}

One interesting fact is that \eqref{eq2} holds for powering maps but not
for all Dickson polynomials.  
While the powering map $f(x) = x^n$ induces a bijection $f_p: \F_p
\lra \F_p$ when $p \not\equiv 1 \pmod{n}$, it is easy to see that when
$p \equiv 1 \pmod {n^r}$, we have $\frac{\#\Per(f_p)}{p} \leq
\frac{1}{n^r}+\frac{1}{p}$.  Thus, in this case, we have $\lim\inf_{p \to \infty}
\#\Per(f_p)/p = 0$ (see Example \ref{powering}).  However, as we shall
see in Example \ref{Cheby}, when $f$ is the Dickson polynomial
$f(x+1/x) = x^\ell + (1/x)^\ell$ where $\ell$ is an odd prime, we
have $$\liminf_{p \to \infty} \#\Per(f_p)/p = 1/2,$$ and where
$f(x+1/x) = x^\ell + (1/x)^\ell$ and $\ell = 2$, then $$\liminf_{p
  \to \infty} \#\Per(f_p)/p = 1/4.$$ (Dickson polynomials of the form
$f(x + 1/x) = x^n + (1/x)^n$ are called Chebyshev polynomials).

In the case of quadratic polynomials, Chebyshev polynomials and their
conjugates are the only polynomials such that \eqref{eq2} fails to
hold.  

\begin{theorem}\label{quad}
Let $k$ be a number field and let $f \in k[x]$ be a quadratic
polynomial.  Then 
\[ \liminf_{\fp \to \infty} \frac{\#\Per(f_\fp)}{\Norm(\fp) + 1} = 0\]
unless there is a linear
polynomial $\sigma = ax + b \in k[x]$ such that $\sigma^{-1} f \sigma$
is equal to the Chebyshev polynomial $x^2 - 2$.  
\end{theorem}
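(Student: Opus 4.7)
The first step is to conjugate $f$ over $k$ into the normal form $f(x) = x^2 + c$ for some $c \in k$. For $f(x) = ax^2 + bx + d \in k[x]$, the $k$-linear change of variables $\sigma(x) = x/a - b/(2a)$ does this; since $\sigma$ is defined over $k$, it induces a bijection on $\bP^1(\fo_k/\fp)$ at all but finitely many primes $\fp$, so $\#\Per(f_\fp)/(\Norm(\fp)+1)$ is preserved up to a $O(1/\Norm(\fp))$ error. A direct calculation shows that the resulting $x^2 + c$ is $k$-conjugate to $x^2 - 2$ precisely when $c = -2$, so it suffices to show $\liminf_{\fp} \#\Per(f_\fp)/(\Norm(\fp)+1) = 0$ whenever $f = x^2 + c$ with $c \neq -2$. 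The only finite critical point of such $f$ is $0$, and its forward orbit never meets $\infty$, so the ``no collisions'' hypothesis of Theorem \ref{collide} (between the two critical points of $f$ on $\bP^1$) reduces to the statement that $0$ itself is not preperiodic, i.e., that $f$ is not postcritically finite (PCF). In the non-PCF case, Theorem \ref{collide}(a) yields $\liminf = 0$ directly.

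Suppose henceforth that $f$ is PCF, so that $0$ is preperiodic under $f$. If $c = 0$, then $f(x) = x^2$ is the squaring map. For each $r \geq 1$, Chebotarev's theorem applied to $k(\zeta_{2^r})/k$ produces a positive-density set of primes $\fp$ with $2^r \mid \Norm(\fp) - 1$, and for such $\fp$ the $f_\fp$-periodic elements of $(\fo_k/\fp)^*$ are exactly the elements of odd order, so
\[ \#\Per(f_\fp) \leq \frac{\Norm(\fp) - 1}{2^r} + 2. \]
Sending $r \to \infty$ gives $\liminf = 0$, which is the argument of Example \ref{powering}. If $c = -2$, then $f$ is the Chebyshev polynomial $x^2 - 2$ and is the excepted case.

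For every remaining PCF parameter $c \in k$ (e.g., $c = -1$, and the algebraic parameters arising from longer preperiodic critical orbits), the plan is to adapt the wreath-product machinery underlying Theorem \ref{generic}. One studies the iterated Galois groups $G_n := \Gal(f^n(x) - t / k(t))$ as subgroups of the $n$-fold iterated wreath product $C_2 \wr \cdots \wr C_2$, acting on the binary rooted tree of $f^n$-preimages of $t$, and relates $\#\Per(f_\fp)$ to the proportion of elements of $G_n$ with a fixed leaf. Since $f$ is $k$-conjugate to neither a powering map nor to $x^2 - 2$, the classification of exceptional polynomials due to Fried and Guralnick--M\"uller--Saxl guarantees that $f$ is non-exceptional, and one expects $G_n$ to remain large enough that the proportion of fixed-leaf elements tends to $0$ along a subsequence of $n$; a Chebotarev argument then produces primes $\fp$ along which the periodic proportion is correspondingly small, giving $\liminf = 0$. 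The main obstacle is quantitative control of $G_n$: the postcritical constraints force $G_n$ to be a proper subgroup of the full wreath product, and one must rule out the appearance of ``Chebyshev-type'' symmetry that would keep the fixed-leaf density bounded away from zero. Carrying this out uniformly across the non-exceptional PCF parameters is the crux of the proof.
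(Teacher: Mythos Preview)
Your reduction to $f(x)=x^2+c$ and your treatment of the non-PCF case are essentially correct and parallel the paper (the paper invokes Proposition~\ref{uni} rather than Theorem~\ref{collide}, but either works once you observe that the only finite critical point is $0$). Your direct handling of $c=0$ via the powering-map argument of Example~\ref{powering} is also fine, and in fact more elementary than what the paper does there.

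The genuine gap is in the remaining PCF case, $c\neq 0,-2$. You correctly identify that one needs $\FPP(G_n)\to 0$ for the geometric iterated Galois groups, and you correctly note that the wreath-product machinery of Theorem~\ref{main theorem} no longer applies because the critical orbit is finite, forcing $G_n$ to be a proper subgroup of $[C_2]^n$. But you do not prove the needed $\FPP(G_n)\to 0$; you only say that ``one expects'' it and that carrying it out is ``the crux of the proof.'' Your appeal to the Fried and Guralnick--M\"uller--Saxl classifications is not sufficient: those results tell you $f$ is not a bijection modulo infinitely many primes, which is much weaker than $\FPP(G_n)\to 0$. Non-exceptionality does not by itself bound the fixed-leaf proportion, and there is no known purely algebraic route from the wreath-product side.

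The paper closes this gap by invoking \cite[Theorem~1.1]{Jones}, which shows that for any PCF polynomial $f$ over $\bC$ one has $\lim_{n\to\infty}\FPP\bigl(\Gal((f^n(x)-t)/\bC(t))\bigr)=0$ unless $f$ is conjugate to $\pm T_d$ or has a fixed point whose non-fixed preimages are all critical. For $x^2+c$ the second alternative is impossible, and the only $c$ giving a conjugate of $\pm T_2$ is $c=-2$. As the paper remarks immediately after the proof, Jones's argument is complex-analytic, based on iterated monodromy groups, and is genuinely different from the ramification-disjointness arguments underlying Theorem~\ref{main theorem}. Once $\FPP(G_n)\to 0$ is in hand over $\bC$, one descends to $k_n(t)$ (the algebraic closure of $k$ in $K_n$) and uses Proposition~\ref{for_use} together with a Chebotarev argument (Lemma~\ref{extendbase}) to produce the primes $\fp$ in $k$ witnessing the liminf, exactly along the lines you sketch in your last paragraph.
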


More generally, we are able to treat Question \ref{q2} for all maps of
the form $f(x) = x^d + c$ (see Theorem~\ref{uni2}).  The fact that
such maps can be treated is perhaps not surprising in light of related
results of \cite{HJM} and \cite{Jones2}.

Our approach follows that of Odoni \cite{odoni}, though with some
differences.  To describe things better, we need a definition.

\begin{definition} If $H$ is a group acting on a set $S$ then we define $\FPP(H)$ to be the proportion of elements of $H$ fixing some $s\in S$. 
\end{definition}

Let $\psi$ be a rational function defined over $\F_q$, let $K_n$ be the 
splitting field of $\psi^n(x) - t$ over $\F_q(t)$, and let $G_n =
\Gal(K_n / \F_q(t))$; suppose that $\F_q$ is algebraically closed in
$K_n$.  Since $G_n$ acts on
 the set of roots of $\psi^n(x) - t$, it makes sense to consider $\FPP(G_n)$. The Chebotarev density theorem for function fields, due to
Murty and Scherk \cite{murty},  implies
that when $\FPP(G_n)$ is small, then the image of $\bP^1(\F_q)$
under $\psi^n$ is small provided that $q$ is sufficiently large. 
Since a periodic point is in the image of $\bP^1(\F_q)$ under
$\psi^n$ for every $n$ (see Lemma \ref{S}), this means that $\psi$ has few
periodic points.   

We will apply this idea to $\psi$ arising from the reduction
$\varphi_\fp$ of a rational function over a number field $k$ modulo a
prime $\fp$ in $k$.  We will see, via Proposition \ref{EGA}, that for all
but finitely many primes $\fp$, the Galois groups of the splitting fields
of $\varphi_\fp^n(x) - t$ is the same as the Galois groups of the
splitting fields of $\varphi^n(x) - t$ over $k(t)$; let us call this
group $G_n$, as above.  Then it suffices to show that $\FPP(G_n)$ is
very small.  This can be difficult to do in general, but Odoni
\cite[Lemma 4.3]{odoni} has shown that if $G_n$ is the $n$-fold wreath
product $[G]^n$ (see Section \ref{main2}) of some transitive group $G$, then $\lim_{n \to
  \infty} \FPP(G_n) = 0$.

We now give a brief outline of the paper.  After some preliminaries in
Section~\ref{prelim}, we state and prove Theorem \ref{main theorem},
which gives conditions guaranteeing that $G_n = [G]^n$. A key fact
here is that primes in the critical orbit ramify ``disjointly'' in the
sequence of splitting fields of $\varphi^n(x) - t$. That is, for each
$n$ we can find primes that are unramified in the splitting field of
$\varphi^{n-1}(x) - t$ and, in each subextension of the splitting
field of $\varphi^n(x) - t$ over $\varphi^{n-1}(x) - t$, at least one
such prime ramifies that ramifies in no other subextension.  Following
that, we show that Galois groups stay the same after almost all
specializations, provided that the extensions are geometrically
integral, in Section \ref{special}.  Next, in Section \ref{CS}, we use
the Murty-Scherck effective Chebotarev theorem \cite{murty} to bound
proportions of periodic points by proportions of fixed point elements
of Galois groups.  We are then able to prove our main theorems on
proportions of periodic points in Section \ref{main2}.  We conclude
with an elementary discussion of periodic points of powering map,
Chebyshev maps, and Latt\`es maps.  

We note that many of the results in this paper, Theorem \ref{main
  theorem} in particular, should generalize to higher dimensional
situations.  We plan to treat the case of higher dimensions in a
future paper.

\vskip2mm
\noindent {\it Acknowledgments.} We would like to thank Rafe Jones and
Michelle Manes for useful conversations.  The first and fourth authors
were partially supported by NSF grant DMS-1200749.  The second author
was was partially supported by  the G\"oran Gustafsson
Foundation for Research in Natural Sciences and Medicine, and the
Swedish Research Council (621-2011-5498.)

\section{Preliminaries}\label{prelim}

We say that $F/k$ is a function field with field of constants $k$ if
$F$ is a finite extension of $k(t)$ where $t$ is transcendental over
$k$ and $k$ is algebraically closed in $F$ (that is, $F$ contains no
elements outside of $k$ that are algebraic over $k$).  Define
$\mathbb{P}_F$ to be the set of all $\mathfrak{p}$ such that
$\mathfrak{p}$ is the maximal ideal of some valuation ring of
$F/k$. 

Let $\varphi \in k(x)$ be a rational function of degree $d$.  We write
$\varphi(x) = p(x)/q(x)$, where $p(x), q(x) \in k[x]$, and we let $P(X,Y)$ and $Q(X,Y)$ be the
degree $d$ homogenizations of $p$ and $q$ respectively; that is,
$P(X,Y) = Y^d p(X/Y)$ and $Q(X,Y) = Y^d q(X/Y)$.  We set $P_0 = P$ and
$Q_0 = Q$ and define $P_n$ and $Q_n$ recursively by $P_n(X,Y) =
P(P_{n-1}(X,Y), Q_{n-1}(X,Y))$ and $Q_n(X,Y) = Q(P_{n-1}(X,Y),
Q_{n-1}(X,Y))$ for $n \geq 1$.  Then, defining $p_k = P_k(X,1)$ and $q_k
= Q_k(X,1)$, any root of $\varphi^n(x) -t$ is a root of 
\begin{equation}\label{phi poly}
p_n(x) - tq_n(x),
\end{equation} 
which is a polynomial with coefficients in
$k(t)$.  If $k$ is a number field and $\fp$ is a nonzero prime in its
ring of integers $\fo_k$, we say that the rational function $\varphi(x)$,
defined as above, has good reduction at $\fp$ if all of the
coefficients of $p$ and $q$ have $\fp$-adic absolute value less than
or equal to 1 and for all $\alpha \in {\overline k}$, we have
$\max\{|P(\alpha, 1)|_\fp, |Q(\alpha, 1)|_\fp\} = 1$
and $\max\{|P(1, \alpha)|_\fp, |Q(1, \alpha)|_\fp\} = 1$.

Let $A$ be a Dedekind domain with fraction field $K$, let $P(x)
\in K[x]$, and $L$ be the splitting field of $P(x)$ over
$K$. It is a
standard result that any prime of $A$ that ramifies in the integral
closure of $A$ in $L$ must divide $\Delta(P(x))$, the usual polynomial
discriminant of $P(x)$(see \cite{Jan} or \cite{Lang}, for example). (Here and elsewhere in this paper, if a prime $\mathfrak{p}$ is said to \emph{divide} an element $\alpha$ of $\mathcal{O}_K$, we mean that $v_{\mathfrak{p}}(\alpha)>0$.)  Now
consider the case where $L$ is the splitting field of $\psi(x)-t$ over $k(t)$, where $\psi(x) \in k(x)$. We can write
$\psi(x)=\frac{p(x)}{q(x)}$ for some $p(x), q(x) \in k[x]$ and any prime of $k[t]$ that ramifies in $L$ must divide $\Delta(p(x)-tq(x))$. 
In \cite{CH}, Cullinan and
Hajir show that one may calculate the discriminant in terms of the critical points of $\psi(x)$. 

\begin{lemma} \label{discriminant formula} (\cite[Proposition 1]{CH}) We have 
\begin{align*}
\Delta(p(x)-tq(x)) &= C \res(p'(x)q(x)-p(x)q'(x), p(x)-tq(x))\\
&= C' \prod_{a\in\psi_\mathfrak{c}}(\psi (a)-t)^{e(a/\psi(a))}
\end{align*}
where $C,C'\in k$ are constants, $\psi_\mathfrak{c}=\{a:\psi
'(a)=0\}$, and $e(a/\psi(a))$ is the ramification index of $a$ over
$\psi(a)$.  
\end{lemma}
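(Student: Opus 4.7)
\noindent\textit{Proof plan.} I would deduce both equalities from standard resultant identities, treating $p-tq$ as a polynomial in $x$ over $k(t)$.

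For the first equality, start from the classical identity $\Delta(f)=\pm\,\mathrm{lc}(f)^{-1}\res(f,f')$ applied to $f(x)=p(x)-tq(x)\in k(t)[x]$. Since $f'=p'-tq'$, this expresses $\Delta(p-tq)$ as a scalar multiple of $\res(p-tq,\,p'-tq')$. The key observation is that if $\alpha$ is a root of $p(x)-tq(x)$ in $\overline{k(t)}$, then $t=p(\alpha)/q(\alpha)$ (which is well-defined because $\res(p,q)\neq 0$ forces $q(\alpha)\neq 0$), so
\[
p'(\alpha)-tq'(\alpha)=\frac{p'(\alpha)q(\alpha)-p(\alpha)q'(\alpha)}{q(\alpha)}=\frac{(p'q-pq')(\alpha)}{q(\alpha)}.
\]
Writing $\res(p-tq,\,p'-tq')$ as the product of $p'(\alpha_i)-tq'(\alpha_i)$ over the roots $\alpha_i$ of $p-tq$ (times the appropriate power of the leading coefficient), the numerators combine to $\res(p-tq,\,p'q-pq')=\pm\res(p'q-pq',\,p-tq)$ up to constants, and the denominators combine to $\prod_i q(\alpha_i)$, which equals $\res(p-tq,q)=\res(p,q)\in k^\times$ by a direct computation (evaluating at the roots of $q$, at which $p-tq=p$). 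This yields the first equality.

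For the second equality, evaluate $\res(p'q-pq',\,p-tq)$ as a product over the zeros of $p'q-pq'$ in $\bar k$. These zeros are exactly the finite critical points of $\psi$: writing $\psi'(x)=(p'q-pq')/q^2$ and using a local expansion $\psi(x)-\psi(a)=u(x)(x-a)^{e(a/\psi(a))}$ with $u(a)\neq 0$, one reads off the order of vanishing of $p'q-pq'$ at each finite $a\in\psi_{\mathfrak{c}}$. For each such $a$ one has $(p-tq)(a)=q(a)(\psi(a)-t)$, so the resultant factors as a product of $(\psi(a)-t)$ raised to the appropriate exponent, times $k$-rational scalars coming from leading coefficients and powers of $q(a)$; collecting these scalars into $C'$ gives the second equality.

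The main bookkeeping obstacle is that when $\deg p=\deg q=d$ the leading coefficient of $p-tq$ in $x$ is $a_d-tb_d\in k[t]$, not a constant, so one must verify that the various $t$-dependent leading-coefficient powers cancel across the two applications of the resultant formula --- once for $f=p-tq$ in the discriminant–resultant identity and once to convert products over roots back into resultants --- leaving a genuine scalar $C\in k$. The fact that $\res(p-tq,q)=\res(p,q)$ is independent of $t$ is what makes this cancellation go through cleanly, and the matching of exponents in the final product against the local ramification data of $\psi$ is then forced by Riemann–Hurwitz (the total degree of $\Delta(p-tq)$ in $t$ must equal $2d-2$ minus contributions from any ramification above $\infty$).
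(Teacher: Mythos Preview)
The paper does not supply a proof of this lemma; it is quoted directly from Cullinan--Hajir \cite{CH}, so there is no argument in the paper to compare yours against. Your resultant manipulation is the standard route and is essentially correct, with one point worth making explicit: the local expansion $\psi(x)-\psi(a)=u(x)(x-a)^{e}$ gives $\mathrm{ord}_a(\psi')=e-1$, hence $\mathrm{ord}_a(p'q-pq')=e-1$ when $q(a)\neq 0$, so the exponent your computation actually produces in the product is $e(a/\psi(a))-1$, not $e(a/\psi(a))$. (Sanity check: for $\psi(x)=x^d$ one has $\Delta(x^d-t)=\pm d^d\,t^{d-1}$, of $t$-degree $d-1$.) Your own Riemann--Hurwitz count is consistent with $e-1$. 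The exponent $e(a/\psi(a))$ in the paper's displayed formula thus appears to be a slip; it is harmless for the downstream applications (Lemmas~\ref{disc} and~\ref{disjoint ramification}), where only the support of $\Delta$ is used.

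On the leading-coefficient bookkeeping: in the generic case $\deg(p'q-pq')=2d-2$ and $\deg q=d$, the net exponent of $\ell=a_d-tb_d$ is indeed $-1+(d-1)-(2d-2)+d=0$, so the cancellation you assert really occurs. When $\deg(p'q-pq')<2d-2$, equivalently when $\infty$ is a critical point of $\psi$ with finite critical value $a_d/b_d$, a positive power of $\ell=-b_d\bigl(t-\psi(\infty)\bigr)$ survives, and this is precisely the factor $(\psi(\infty)-t)^{e(\infty/\psi(\infty))-1}$ that your product over finite critical points omits; you should say so rather than leave it to the reader. Finally, $\res(p-tq,q)=\res(p,q)$ is literally correct only when $\deg p=d$; in general the two differ by a power of $\mathrm{lc}(q)$ and a sign, i.e.\ by a unit in $k$, which suffices for your purposes.
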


Thus, we see that any prime $\mathfrak{p}$ of $k[t]$ that ramifies in
a splitting field for $p(x) - t q(x)$  must divide $\prod_{a\in\psi_\mathfrak{c}}(\psi (a)-t)^{e(a/\psi(a))}$.

We now introduce wreath product actions on roots of iterates of
polynomials.  Since we are working with Galois groups that may not be
the full symmetric group, we need slightly more technical definitions
than those of \cite{odoni}.

\begin{definition}
Let $\psi(x),\gamma(x)$ be rational functions in $K(x)$ with $\deg(\psi)=\ell,\deg(\gamma)=d$, such that $\psi(\gamma(x))$ has $\ell d$ distinct roots in $\overline{K}$. A \emph{$\psi, \gamma$-compatible numbering} on the roots of $\psi(\gamma(x))$ is a numbering that assigns to each root a unique ordered pair $(i,j) \in \{1,...,\ell\}\times\{1,...,d\}$ such that if $\alpha_1,...,\alpha_\ell$ are the roots of $\psi$, then the set $\{i\}\times \{1,...,d\}$ is assigned to the roots of $\gamma(x)-\alpha_i$
\end{definition}


\begin{definition} Let $G$ and $H$ be groups acting on the finite sets $\{1,...,\ell\}$ and $\{1,...,d\}$ respectively. We denote the \emph{wreath product} of $G$ by $H$ as $G[H]$, and define it by its action on $\{1,...,\ell\}\times\{1,...,d\}$ as follows. We write $\sigma\in G[H]$ as $(\pi;\tau_1,...,\tau_\ell)$ where $\pi\in G,$ and $ \tau_1,...,\tau_\ell \in H.$ Then $\sigma(i,j)=(\pi(i),\tau_i(j))$. 
\end{definition}

The following lemma generalizes \cite[Lemma 4.1]{odoni}.  

\begin{lemma}\label{wreath product} Let $\psi(x),\gamma(x)\in K(x)$ with $\deg(\psi)=\ell,\deg(\gamma)=d$, $\ell,d \geq 1$, such that $\psi(\gamma(x))$ has $\ell d$ distinct roots in $\overline{K}$. Let $\alpha_1,...,\alpha_\ell$ be the roots of $\psi(x)$, $M_i$ the splitting field of $\gamma(x)-\alpha_i$ over $K(\alpha_i)$, and $G:=\Gal(\psi(x)/K)$. Let $H=\Gal(M_1/K(\alpha_1))$. If $\Gal(M_i/K(\alpha_i))\cong H$ for all $i=1,...,\ell$, then there is an embedding $\iota: \Gal(\psi(\gamma(x))/K) \hookrightarrow G[H]$. Furthermore, there is a $\psi, \gamma$-compatible numbering on the roots such that $\Gal(\psi(\gamma(x))/K) \leq G[H]$. 
\end{lemma}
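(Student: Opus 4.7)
The plan is to construct a compatible numbering of the roots of $\psi(\gamma(x))$ under which each element of $\Gal(\psi(\gamma(x))/K)$ acts as an element of $G[H]$ viewed as a subgroup of $S_{\ell d}$. Let $L$ be the splitting field of $\psi(\gamma(x))$ over $K$ and $L_\psi \subseteq L$ the splitting field of $\psi$. Since every root of each $\gamma(x)-\alpha_i$ lies in $L$, the field $L$ is the compositum of the $M_i$, and restriction yields a short exact sequence
\[
1 \longrightarrow \Gal(L/L_\psi) \longrightarrow \Gal(L/K) \longrightarrow G \longrightarrow 1,
\]
whose kernel fixes every $\alpha_i$ and hence stabilizes each $M_i$ setwise.

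To choose numberings, I would partition $\{\alpha_1,\dots,\alpha_\ell\}$ into $G$-orbits, pick a representative $\alpha_{i_0}$ from each orbit, and number the roots of $\gamma(x)-\alpha_{i_0}$ as $\beta_{i_0,1},\dots,\beta_{i_0,d}$ so that $\Gal(M_{i_0}/K(\alpha_{i_0}))$ acts as the prescribed permutation group $H \leq S_d$; this is possible by the hypothesis $\Gal(M_{i_0}/K(\alpha_{i_0})) \cong H$. For every other $\alpha_i$ in the same orbit, fix some $\sigma_i \in \Gal(L/K)$ with $\sigma_i(\alpha_{i_0}) = \alpha_i$ (taking $\sigma_{i_0}=\mathrm{id}$), and declare $\beta_{i,j} := \sigma_i(\beta_{i_0,j})$. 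This produces a $\psi,\gamma$-compatible numbering.

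Given $\sigma \in \Gal(L/K)$, let $\pi \in G$ be its restriction to $L_\psi$, so $\sigma(\alpha_i) = \alpha_{\pi(i)}$. Then
\[
\sigma(\beta_{i,j}) \;=\; \sigma\sigma_i(\beta_{i_0,j}) \;=\; \sigma_{\pi(i)}\bigl(\sigma_{\pi(i)}^{-1}\sigma\sigma_i\bigr)(\beta_{i_0,j}).
\]
Because $\sigma_{\pi(i)}^{-1}\sigma\sigma_i$ fixes $\alpha_{i_0}$, its restriction to $M_{i_0}$ is an element $\tau_i \in \Gal(M_{i_0}/K(\alpha_{i_0})) = H$, and the display above becomes $\sigma(\beta_{i,j}) = \beta_{\pi(i),\tau_i(j)}$, which is exactly the action of $(\pi;\tau_1,\dots,\tau_\ell) \in G[H]$ in the compatible numbering. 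I would then verify by direct calculation that $\sigma \mapsto (\pi;\tau_1,\dots,\tau_\ell)$ respects the wreath product multiplication, and injectivity is automatic since $\sigma$ is determined by its values on the $\ell d$ roots $\beta_{i,j}$.

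The principal obstacle is arranging that each $\tau_i$ lands in $H$ rather than merely in $S_d$; this is precisely what is gained by propagating the numbering from the orbit representative $\alpha_{i_0}$ via the chosen $\sigma_i$'s, which forces $\sigma_{\pi(i)}^{-1}\sigma\sigma_i|_{M_{i_0}} \in H$ to become the local factor $\tau_i$. A small check shows the embedding is independent of the particular $\sigma_i$'s chosen (any other choice differs by an element of $\Gal(L/L_\psi)$, which acts within each $M_i$), and the lemma follows.
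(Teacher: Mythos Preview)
Your argument is essentially the paper's: pick Galois elements carrying a base root $\alpha_1$ to each $\alpha_i$ (the paper's $\theta_i$, your $\sigma_i$), transport the numbering of roots of $\gamma(x)-\alpha_1$ along them, and then show $\tau_i\in H$ by noting that $\sigma_{\pi(i)}^{-1}\sigma\sigma_i$ fixes $\alpha_1$ and hence restricts to an element of $\Gal(M_1/K(\alpha_1))=H$. The only difference is cosmetic: the paper works with the single base point $\alpha_1$ (tacitly using that $G$ is transitive, as it is in every application since $p_n(x)-tq_n(x)$ is irreducible), whereas you partition into $G$-orbits; be aware that for an orbit not containing $\alpha_1$ your assertion that one can number the roots of $\gamma(x)-\alpha_{i_0}$ so that $\Gal(M_{i_0}/K(\alpha_{i_0}))$ acts \emph{as the permutation group} $H\le S_d$ requires a permutation-group isomorphism, which the stated hypothesis (an abstract isomorphism) does not by itself supply. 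Also, the final remark that the embedding is independent of the $\sigma_i$ is not needed and not literally true---different choices yield different (conjugate) embeddings, but any one of them suffices for the lemma.
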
 \label{Change 3}

\begin{proof} We may write $$\psi(\gamma(x))=\prod_{i=1}^\ell (\gamma(x)-\alpha_i).$$ We will construct the desired numbering on the roots of $\psi(\gamma(x))$. First choose any numbering $(1,1),...,(1,d)$ on the roots of $\gamma(x)-\alpha_1$. For each $i=2,...,\ell$ choose $\theta_i \in \Gal(\psi(\gamma(x))/K)$ such that $\theta_i(\alpha_1)=\alpha_i$ then $\theta_i(1,j)$ is a root of $\gamma(x)-\alpha_i$ for each $j$. Number the roots of $\gamma(x)-\alpha_i$ so that $\theta_i(1,j)=(i,j)$. 

 Let $\sigma \in \Gal(\psi(\gamma(x))/K)$.  Then $\sigma$ induces a $K$-automorphism $\pi$ that permutes $\{\alpha_1,...,\alpha_\ell\}$. Thus $\pi \in G$. Now fix $i$ and note that $\sigma(i,j)=(\pi(i),s)$ for some $s\in\{1,\dots,d\}$. This defines a map $\tau_i\in\text{Perm}(1,\dots,d)$ by $\tau_i(j)=s$. Then $\sigma(i,j)=(\pi(i),\tau_{i}(j))$ so, using the above wreath product notation, $\sigma=(\pi;\tau_1,...,\tau_\ell) \in G[S_d]$. It remains to show $\tau_i \in H$ for each $i$. Consider $\theta_{\pi(i)}^{-1}\sigma\theta_i (1,j)= \theta_{\pi(i)}^{-1}\sigma (i,j)= \theta_{\pi(i)}^{-1}(\pi(i),\tau_i(j))=(1,\tau_i(j))$. So $\theta_{\pi(i)}^{-1}\sigma\theta_i$ fixes $\alpha_1$ and $\theta_{\pi(i)}^{-1}\sigma\theta_i \big|_{M_1}=\tau_i$ and hence, $\tau_i \in H$.
\end{proof}

\section{Criteria for wreath product} \label{proof of theorem}

Let $\varphi(x) \in k(x)$ be a rational function with degree $d$, such that $\varphi'(x)\neq
0$. Note, the roots of $\varphi^n(x)-t$ are the roots of $p_n(x)-tq_n(x)$, and $p_n(x)-tq_n(x)$ is separable. To see
this, note that since $p_n(x)-tq_n(x)$ is irreducible, if it has
a double root, we must have $p_n'(x)-tq_n'(x)=0$ for all $x$. Then
${(\varphi^n)}'=\frac{q_np_n'-p_nq_n'}{(q_n)^2}=0$ for all $x$. But
since we assumed $\varphi'(x)\neq 0$, ${(\varphi^n)}'(x)\neq 0 $ by
induction on $n$.

Let $K_n$ be the splitting field of $\varphi^n(x)-t$ over $k(t)$,
$E=K_1\cap \overline{k}$, and $G_n:=\Gal(K_n/E(t))$.  We let $G =
G_1$.  We let $\varphi_{\fc}$ denote the critical points of $\varphi$
in $\bP^1({\bar k})$.  
We also adopt some notation regarding extension of primes in finite
extensions of function fields.  Let $L_1 \subseteq L_2$ be a separable
finite extension of function fields.  If $\fp$ is a prime with
discrete valuation ring $\cO_{\fp}$, then we say that the prime $\fq$ of
$L_2$ {\em extends $\fp$ in $L_2/L_1$} if $\fq$ appears in the factorization
of $\fp$ in the integral closure of $\cO_\fp$ in $L_2$. 
(This
terminology is fairly standard.)  Likewise, in the language of points,
we say that a point $\beta \in \bP_{L_2}$ extends a point $\alpha \in
\bP_{L_1}$ in $L_2/L_1$ if the prime ideal corresponding to $\beta$
extends the prime ideal corresponding to $\alpha$ in $L_2/L_1$.  

Our first main theorem gives conditions that ensure that $G_n \cong
[G]^n$.  This is similar to but more general than some recent work of
Pink \cite[Theorem 4.8.1]{Pink1} for quadratic maps, although Pink's criterion is both
  sufficient and necessary, whereas ours is only sufficient.

\begin{theorem}\label{main theorem}
Suppose $\varphi(x) \in k(x)$ is a rational function of degree $d\geq
2$ such that $\varphi'(x)\neq 0$. Fix $N\in\mathbb{N}$ and suppose
there is a subset $S \subseteq \varphi_\fc$
such that the following holds:
\begin{enumerate}
\item  for any $a\in S$, $b \in \varphi_{\fc}$,  and $m,n\leq N$, we have
  $\varphi^m(a)\neq \varphi^n(b)$ unless $a=b$ and $m=n$; and
\item  the group $G$ is generated by the ramification groups of the
  $\varphi(a)$ for $a \in S$, that is 
\[ \big \langle \bigcup_{a \in S} \; \; \bigcup_{\substack{z\text{ extends }\varphi(a) \\
    \text{in }K_1/E(t)}}  I(z/\varphi(a)) \big\rangle = G.  \]
\end{enumerate}
Then we have $G_N \cong [G]^N$.
\end{theorem}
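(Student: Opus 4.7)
The plan is to induct on $n \le N$; the case $n = 1$ is immediate. Assume $G_{n-1} \cong [G]^{n-1}$. Let $\beta_1, \ldots, \beta_{d^{n-1}}$ be the roots of $\varphi^{n-1}(x) - t$; for each $i$ let $M_i$ be the splitting field of $\varphi(x) - \beta_i$ over $E(\beta_i)$ and $L_i := M_i \cdot K_{n-1}$, so that $K_n = L_1 \cdots L_{d^{n-1}}$. The $E$-isomorphism $E(t) \cong E(\beta_i)$ sending $t \mapsto \beta_i$ carries $K_1$ to $M_i$, so $\Gal(M_i/E(\beta_i)) \cong G$, and Lemma~\ref{wreath product} applied with $\psi(x) = \varphi^{n-1}(x) - t$ and $\gamma(x) = \varphi(x)$ yields an embedding $G_n \hookrightarrow G_{n-1}[G] \cong [G]^n$. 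It therefore suffices to prove $[K_n : K_{n-1}] = |G|^{d^{n-1}}$.

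The engine of the argument is the following standard ramification principle: if $L/K$ is Galois with $K \subseteq F \subseteq L$, then any prime of $K$ unramified in $F$ has its inertia subgroup in $\Gal(L/K)$ contained in $\Gal(L/F)$; consequently, if $\Gal(L/K)$ is generated by inertia at primes unramified in $F$, then $F = K$. I will apply it twice.

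\emph{Step 1 ($[L_i:K_{n-1}] = |G|$):} equivalently, $M_i \cap K_{n-1} = E(\beta_i)$. Under $t \mapsto \varphi^{n-1}(\beta_i)$, the prime $(\beta_i - \varphi(a))$ of $E(\beta_i)$ lies over $(t - \varphi^n(a))$ in $E(t)$. By Lemma~\ref{discriminant formula} the primes of $E(t)$ ramified in $K_{n-1}$ are supported on the critical values $\varphi^j(\varphi_{\fc})$ with $1 \le j \le n-1$, and condition~(1) forbids $\varphi^n(a) = \varphi^j(b)$ for $a \in S$, $b \in \varphi_{\fc}$, and $1 \le j \le n-1$; hence $(t - \varphi^n(a))$ is unramified in $K_{n-1}/E(t)$. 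Similarly, condition~(1) precludes $\varphi^j(a) \in \varphi_{\fc}$ for $1 \le j \le n-1$ (apply the condition to $\varphi^{j+1}(a) = \varphi^1(b)$), so $\varphi(a)$ is a simple preimage of $\varphi^n(a)$ under $\varphi^{n-1}$ and $(\beta_i - \varphi(a))$ is unramified in $E(\beta_i)/E(t)$. Multiplicativity then gives unramification in $K_{n-1}/E(\beta_i)$, and the engine, using inertia generators for $\Gal(M_i/E(\beta_i)) \cong G$ obtained by transporting condition~(2) through $E(t) \cong E(\beta_i)$, yields $M_i \cap K_{n-1} = E(\beta_i)$.

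\emph{Step 2 (joint linear disjointness of the $L_i$ over $K_{n-1}$):} inductively, I show $L_i \cap (L_1 \cdots L_{i-1}) = K_{n-1}$. Fix a prime $\fq$ of $K_{n-1}$ over $(\beta_i - \varphi(a))$ for some $a \in S$. Because $K_{n-1}/E(\beta_i)$ is unramified at $(\beta_i - \varphi(a))$ by Step~1, the restriction isomorphism $\Gal(L_i/K_{n-1}) \xrightarrow{\sim} \Gal(M_i/E(\beta_i))$ identifies the inertia subgroups of $L_i/K_{n-1}$ at primes above $\fq$ with the inertia of $M_i/E(\beta_i)$ at $(\beta_i - \varphi(a))$; letting $a$ vary in $S$ and $\fq$ vary over primes above the corresponding $(\beta_i - \varphi(a))$, these generate $\Gal(L_i/K_{n-1}) \cong G$. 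It remains to show $\fq$ is unramified in $L_j/K_{n-1}$ for every $j \ne i$. Since $t \equiv \varphi^n(a) \pmod{\fq}$ and $\varphi^{n-1}(\beta_j) = t$, the residues $\beta_j \bmod \fq$ are the $d^{n-1}$ preimages of $\varphi^n(a)$ under $\varphi^{n-1}$ in $\bP^1$; these are pairwise distinct (by the non-critical-value computation of Step~1) with $\beta_i \bmod \fq = \varphi(a)$. If $\fq$ were over $(\beta_j - \varphi(b))$ for some $j \ne i$ and $b \in \varphi_{\fc}$, then $\varphi^n(b) = \varphi^n(a)$, so $b = a$ by condition~(1), whence $\beta_j \equiv \varphi(a) \equiv \beta_i \pmod{\fq}$, contradicting distinctness. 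Chaining disjointness over $i$ gives $[K_n:K_{n-1}] = \prod_i [L_i:K_{n-1}] = |G|^{d^{n-1}}$. I expect the main obstacle will be the bookkeeping in Step~2: confirming that the inertia generators furnished by condition~(2) for $K_1/E(t)$ transport intact through both the isomorphism $E(t) \cong E(\beta_i)$ and the unramified base $K_{n-1}/E(\beta_i)$ to give generators for $\Gal(L_i/K_{n-1})$.
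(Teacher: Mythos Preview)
Your proof is correct and follows essentially the same strategy as the paper's: both pivot on the fact that the primes $(\beta_i-\varphi(a))$ for $a\in S$ (i) carry inertia generating $G$ by condition~(2), transported along $E(t)\cong E(\beta_i)$, and (ii) are unramified in $K_{n-1}$ and in every $M_j$ with $j\ne i$ by condition~(1) and the discriminant formula. The paper packages (i)--(ii) into a single lemma (its Lemma~\ref{disjoint ramification}, showing $(\varphi(a)-\alpha_i)$ is unramified in $\widehat{M_i}:=K_n\cdot\prod_{j\ne i}M_j$) and then invokes Abhyankar's lemma to identify inertia in $K_{n+1}/\widehat{M_i}$ with inertia in $M_i/E(\alpha_i)$; your ``engine'' together with the unramified-base-change identification in Step~2 is exactly the same mechanism, phrased without naming Abhyankar. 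Your two-step decomposition (first $M_i\cap K_{n-1}=E(\beta_i)$, then sequential linear disjointness of the $L_i$ over $K_{n-1}$) is a slightly cleaner bookkeeping of the same degree count the paper obtains by showing $|\Gal(K_{n+1}/\widehat{M_i})|=|G|$ for every $i$.

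One technical point the paper handles explicitly and you omit: before running the argument, the paper passes to an extension $E'$ of $E$ (noting $|\Gal(K_N\cdot E'/E'(t))|\le|\Gal(K_N/E(t))|$, so it suffices to prove the theorem over $\overline{E}$) and changes coordinates so that no $\varphi^m(a)$ with $a\in S$, $m\le N$, is the point at infinity. You implicitly use both reductions when you write primes as $(\beta_i-\varphi(a))$ and $(t-\varphi^n(a))$ --- if $\varphi(a)\notin E$ these are not prime ideals of $E[\beta_i]$ or $E[t]$ --- and when you match ramified primes with linear factors of the discriminant. This is not a substantive gap, but you should insert the reduction at the top of the argument.
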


Let $\alpha_1, ..., \alpha_{d^n}$ be the distinct roots of $\varphi^n(x)-t$ in $\overline{k(t)}$. 
Let $M_i$ be the splitting field of $\varphi(x)-\alpha_i$ over $E(\alpha_i) = E(t,\alpha_i)$. Let $\widehat{M_i}:= K_n\left[\prod_{j\neq i}M_j\right]$.

\begin{lemma} \label{wreath subgroup} The group $G_n$ is isomorphic to a subgroup of $[G]^n$.
\end{lemma}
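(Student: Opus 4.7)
The plan is to induct on $n$, with Lemma \ref{wreath product} supplying the inductive step. The base case $n=1$ is immediate since $G_1 = G = [G]^1$. For the inductive step, I apply Lemma \ref{wreath product} over $K = E(t)$ with $\psi(x) = \varphi^{n-1}(x) - t$ (of degree $\ell = d^{n-1}$) and $\gamma(x) = \varphi(x)$ (of degree $d$), so that $\psi(\gamma(x)) = \varphi^n(x) - t$. Separability of this composite --- that is, having $\ell d = d^n$ distinct roots in $\overline{E(t)}$ --- is exactly the observation at the start of Section \ref{proof of theorem}, which used $\varphi'(x) \neq 0$. By definition, $\Gal(\psi/K) = G_{n-1}$.

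The key verification is that, for each root $\alpha_i$ of $\psi$, one has $H_i := \Gal(M_i/E(t,\alpha_i)) \cong G$, where $M_i$ is the splitting field of $\varphi(x) - \alpha_i$ over $E(t,\alpha_i)$. Since $t = \varphi^{n-1}(\alpha_i)$ is transcendental over $E$ and lies in $E(\alpha_i)$, the element $\alpha_i$ is itself transcendental over $E$ and $E(t,\alpha_i) = E(\alpha_i)$ is a rational function field in one variable over $E$. The $E$-isomorphism $E(\alpha_i) \to E(t)$ sending $\alpha_i \mapsto t$ fixes the coefficients of $\varphi$ (which lie in $k \subseteq E$) and sends $\varphi(x) - \alpha_i$ to $\varphi(x) - t$; it therefore lifts to an isomorphism of splitting fields $M_i \cong K_1$ that is compatible with the actions on the respective root sets. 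Hence $H_i \cong \Gal(K_1/E(t)) = G$ for every $i$, so the hypothesis of Lemma \ref{wreath product} is satisfied with $H = G$, and the lemma produces an embedding $G_n \hookrightarrow G_{n-1}[G]$.

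To finish, the inductive hypothesis supplies an embedding $G_{n-1} \hookrightarrow [G]^{n-1}$ as subgroups of the symmetric group on the $d^{n-1}$ roots of $\psi$. Because wreath product is monotonic in its first argument --- if $A \leq B$ as subgroups of the same symmetric group and $H$ is any group, then the definition of the action immediately yields $A[H] \leq B[H]$ --- we obtain $G_{n-1}[G] \hookrightarrow [G]^{n-1}[G] = [G]^n$. Composing with the previous embedding gives $G_n \hookrightarrow [G]^n$, completing the induction. The only real subtlety is the identification $H_i \cong G$; it rests on the fact that $E$ remains algebraically closed in $M_i$, which here is automatic because the change of variable $\alpha_i \leftrightarrow t$ identifies $M_i$ with $K_1$ as extensions of $E$.
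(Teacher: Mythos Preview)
Your proof is correct and follows essentially the same approach as the paper: both argue by induction on $n$, invoke Lemma~\ref{wreath product} with $\psi = \varphi^{n-1}(x)-t$ and $\gamma = \varphi$, and use the isomorphism $E(\alpha_i)\cong E(t)$ (sending $\alpha_i\mapsto t$) to identify each $\Gal(M_i/E(\alpha_i))$ with $G$. The paper's version is simply terser, leaving implicit the details you spell out (separability, the explicit field isomorphism, and the monotonicity $G_{n-1}[G]\hookrightarrow [G]^{n-1}[G]=[G]^n$).
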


\begin{proof} Note that $E(\alpha_i) \cong E(t)$ so we have $\Gal(M_i/E(\alpha_i)) \cong
  \Gal(K_1/E(t)) \cong G$. Now the result follows immediately from Lemma \ref{wreath product} and induction on $n$.
\end{proof}

To make certain computations easier, we will work with
discriminants in $E[t]$ rather than ramification divisors.  In order
to make this possible, we make a few reductions here.  We note that,
since for any extension $E'$ of $E$, we have $|\Gal(K_N\cdot E' /
E'(t))| < |\Gal(K_N/E(t))|$, it will suffice to show that
$\Gal(K_N\cdot E' / E'(t)) \cong [G]^N$ for some extension $E'$ of
$E$.  Hence, we may assume that $E$ is algebraically closed.  Since
$E$ is then infinite, and a change of variables on $\varphi$ does not
affect $\Gal(K_N/E(t))$, we may therefore assume that 
\begin{equation}\label{inf}
\text{ if $a \in S$ and
$m \leq N$, then $\varphi^m(a)$ is not the point at
infinity.}  
\end{equation}
Furthermore, we may assume that every prime in $E[t]$ is of the form
$(z - t)$ for some $z \in E$, and that the prime at infinity in $E(t)$
does not ramify in $K_n$ for any $n \leq N$.  Hence, in the next two
lemmas, we assume that the conditions of Theorem \ref{main theorem}
hold, that $E$ is algebraically closed, and that \eqref{inf} holds.
 
\begin{lemma}\label{disc}
  Let $n < N$.  The only primes in $E(t)$ that ramify in $K_n$ are those of the form
  $(\varphi^m(a) - t)$ for $a \in \varphi_\fc$ and $m \leq n$.
\end{lemma}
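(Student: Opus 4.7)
The plan is to apply Lemma~\ref{discriminant formula} directly to the iterate $\varphi^n$. Writing $\varphi^n = p_n/q_n$ as in Section~\ref{prelim}, we have already observed at the start of Section~\ref{proof of theorem} that $p_n(x) - t q_n(x)$ is separable, so its discriminant is nonzero. Any prime of $E[t]$ that ramifies in the splitting field $K_n$ of $p_n(x) - tq_n(x)$ must divide this discriminant. By Lemma~\ref{discriminant formula}, this discriminant equals a nonzero constant times
\[ \prod_{a \in (\varphi^n)_\fc} (\varphi^n(a) - t)^{e(a/\varphi^n(a))}, \]
so any ramified prime of $E[t]$ has the form $(\varphi^n(a) - t)$ for some critical point $a$ of $\varphi^n$.

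Next, I would use the chain rule to identify $(\varphi^n)_\fc$. Since
\[ (\varphi^n)'(a) \;=\; \prod_{i=0}^{n-1} \varphi'\bigl(\varphi^i(a)\bigr), \]
the point $a$ is a critical point of $\varphi^n$ if and only if $\varphi^i(a) \in \varphi_\fc$ for some $0 \le i \le n-1$. Setting $c = \varphi^i(a)$ and $m = n-i$, we then have $\varphi^n(a) = \varphi^m(c)$ with $c \in \varphi_\fc$ and $1 \le m \le n$. Thus every finite ramified prime of $E[t]$ is of the required form $(\varphi^m(c) - t)$ with $c \in \varphi_\fc$ and $m \le n$.

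It remains to rule out ramification at the prime at infinity of $E(t)$, but this is exactly the content of the reduction made just before the statement of the lemma: we assumed $E$ is algebraically closed and that the prime at infinity does not ramify in $K_n$ for any $n \le N$. Combining this with the previous paragraph gives the lemma.

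The only point demanding care is ensuring that the critical points of $\varphi^n$ appearing in the discriminant are exactly those detected by the chain rule — in particular that no critical behaviour at infinity is missed; this is handled by the reduction \eqref{inf} together with the unramifiedness of the infinite prime. Everything else is a direct application of Lemma~\ref{discriminant formula} to $\varphi^n$, which is legitimate since $\varphi^n$ is a rational function of degree $d^n$ with nonvanishing derivative.
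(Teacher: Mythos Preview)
Your proof is correct and follows essentially the same approach as the paper: apply Lemma~\ref{discriminant formula} to $\varphi^n$, use the chain rule to express the critical points of $\varphi^n$ in terms of those of $\varphi$, and invoke the standing reduction that the prime at infinity does not ramify in $K_n$. The paper's version differs only cosmetically in that it writes out the discriminant $\Delta(p_n(x)-tq_n(x))$ explicitly with its exponents, but the argument is the same.
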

\begin{proof}
  We have seen that the prime at infinity does not ramify in $K_n$.
  For any $i$, we see, by Lemma \ref{discriminant formula}, that the primes
  of $E(t)$ that ramify in $K_n$ are those dividing
\[
\Delta(p_n(x)-tq_n(x))=\prod_{b\in\varphi_\mathfrak{c}}\left(
  (\varphi(b)-t)^{d^{n-1}}(\varphi^2(b)-t)^{d^{n-2}}\dots(\varphi^n(b)-t)
\right)^{e(b/\varphi(b))  } \]
where the above equality follows from repeated application of the
chain rule to iterates of $\varphi$.  
\end{proof}

Before continuing, we make a simple observation.  Let $\alpha_i$ be a
root of $\varphi^n(x) - t = 0$ as above.  Under the inclusion of
fields $E(t) \subseteq E(\alpha_i)$, any prime $(z - \alpha_i)$ extends
the prime $(\varphi^n(z) -  t)$ in $E(\alpha_i)/E(t)$, since $\alpha_i$ is a solution to
$\varphi^n(x) = t$.  

\begin{lemma}\label{disjoint ramification} Let $n < N$ and $a\in S$.  The
  prime $(\varphi(a) - \alpha_i)$ in $E(\alpha_i)$ does not ramify in 
  $\widehat{M_i}$.
\end{lemma}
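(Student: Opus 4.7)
The plan is to work upward through the tower $E(\alpha_i)\subseteq K_n\subseteq \widehat{M_i}$, showing unramifiedness at each stage. First, the prime $(\varphi(a)-\alpha_i)$ restricts to $(\varphi^{n+1}(a)-t)$ in $E(t)$, because modding out by $(\varphi(a)-\alpha_i)$ sends $\alpha_i\mapsto\varphi(a)$ and hence $t=\varphi^n(\alpha_i)\mapsto\varphi^{n+1}(a)$. By Lemma~\ref{disc}, the primes ramifying in $K_n/E(t)$ are those of the form $(\varphi^m(b)-t)$ with $b\in\varphi_\fc$ and $1\leq m\leq n$, and condition~(1) of Theorem~\ref{main theorem} (applied with $a\in S$, $b\in\varphi_\fc$, and exponents $n+1$ and $m\leq n$, both $\leq N$ since $n<N$) excludes $\varphi^{n+1}(a)=\varphi^m(b)$. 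Thus $(\varphi^{n+1}(a)-t)$ is unramified in $K_n/E(t)$, and consequently $(\varphi(a)-\alpha_i)$ is unramified in $K_n/E(\alpha_i)$.

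Next, fix a prime $\mathfrak{Q}$ of $K_n$ extending $(\varphi(a)-\alpha_i)$. Since $\widehat{M_i}=K_n\prod_{j\neq i}M_j$ is a compositum of the $K_nM_j/K_n$, and unramifiedness is preserved under compositum and base change, it suffices to show that for each $j\neq i$ the restriction $\mathfrak{q}_j:=\mathfrak{Q}\big|_{E(\alpha_j)}$ is unramified in $M_j/E(\alpha_j)$. By Lemma~\ref{discriminant formula}, $\mathfrak{q}_j$ can ramify in $M_j$ only if $\mathfrak{q}_j=(\varphi(b)-\alpha_j)$ for some $b\in\varphi_\fc$.

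The crux is to identify $\mathfrak{q}_j$ via specialization. Since $E$ is algebraically closed (by the reduction preceding Lemma~\ref{disc}) and $\mathfrak{Q}$ is unramified over $(\varphi^{n+1}(a)-t)$, the residue field at $\mathfrak{Q}$ is $E$, so each $\alpha_k$ specializes to some $\bar\alpha_k\in E$ with $\varphi^n(\bar\alpha_k)=\varphi^{n+1}(a)$. A further appeal to condition~(1) shows that $\varphi^{n+1}(a)$ is not a critical value of $\varphi^n$ (critical values of $\varphi^n$ have the form $\varphi^j(b)$ with $b\in\varphi_\fc$, $1\leq j\leq n$), so $\varphi^n(x)-\varphi^{n+1}(a)$ has $d^n$ distinct roots and $k\mapsto\bar\alpha_k$ is a bijection onto them. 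Since $\bar\alpha_i=\varphi(a)$ by the choice of $\mathfrak{Q}$, we have $\bar\alpha_j\neq\varphi(a)$ for all $j\neq i$, so $\mathfrak{q}_j=(\bar\alpha_j-\alpha_j)$ with $\bar\alpha_j\neq\varphi(a)$. If $\mathfrak{q}_j$ were to ramify, then $\bar\alpha_j=\varphi(b)$ for some $b\in\varphi_\fc$, giving $\varphi^{n+1}(b)=\varphi^n(\bar\alpha_j)=\varphi^{n+1}(a)$; condition~(1) then forces $a=b$ and hence $\bar\alpha_j=\varphi(a)$, a contradiction. The main subtlety is the bookkeeping in the second paragraph: correctly invoking base-change and compositum stability while tracking $\mathfrak{Q}$ across the tower so that each $\mathfrak{q}_j$ is identified as a concrete prime of $E(\alpha_j)$.
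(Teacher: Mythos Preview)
Your proof is correct and follows essentially the same route as the paper's. Both arguments first reduce to showing unramifiedness in $K_n/E(\alpha_i)$ via Lemma~\ref{disc} and condition~(1), and then handle each $M_jK_n/K_n$ by showing that a prime $\mathfrak{Q}$ of $K_n$ above $(\varphi(a)-\alpha_i)$ cannot lie above any $(\varphi(b)-\alpha_j)$ without forcing $a=b$ and then $\alpha_i\equiv\alpha_j\bmod\mathfrak{Q}$; the only cosmetic difference is that you phrase this last contradiction via distinctness of the specializations $\bar\alpha_k$ (using that $\varphi^{n+1}(a)$ is not a critical value of $\varphi^n$), while the paper phrases it as $(\alpha_i-\alpha_j)\in\mathfrak{Q}$ forcing $\mathfrak{Q}$ to ramify over $(\varphi^{n+1}(a)-t)$.
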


\begin{proof}
  We will show that
  $(\varphi(a)-\alpha_i)$ does not ramify in $K_n/E(\alpha_i)$ and
  that the primes extending it in $K_n/E(\alpha_i)$ do not ramify in
  $M_jK_n$ if $i \neq j$.

We have assumed that $\varphi^{n+1}(a)-t \neq \varphi^m(b)-t$ for any
$m \leq n$, any $a\in S$, and $b \in \varphi_{\fc}$.  Thus, by Lemma
\ref{disc}, we see that $(\varphi^{n+1}(a) - t)$ does not ramify in
$K_n$. Since $(\varphi(a)-\alpha_i)$ extends $(\varphi^{n+1}(a) - t)$
in $E(\alpha_i)/E(t)$, it follows that $(\varphi(a)-\alpha_i)$ does not
ramify in $K_n$.
  
We can also see that that $(\varphi(a) - \alpha_i)$ does  not ramify in $M_jK_n$ for $j
\neq i$ since the primes of $K_n$ ramifying in $M_jK_n$ are those
dividing
\[\Delta(\varphi(x)-\alpha_j):=\prod_{b\in\varphi_\mathfrak{c}}
(\varphi(b)-\alpha_j)^{e(b/\varphi(b))},\] by Lemma \ref{discriminant formula}.  If a
prime $\fp$ of $K_n$ extending $(\varphi(a)-\alpha_i)$ in
$K_n/E(\alpha_i)$ ramifies in $M_j$ then $\mathfrak{p}$ divides
$\Delta(\varphi(x)-\alpha_j)$, so $\fp|(\varphi(b)-\alpha_j)$ for some
$b \in \varphi_{\fc}$.  Hence, $\fp$ extends the prime $(\varphi(a) -
\alpha_i)$ in $K_n/E(\alpha_i)$ and extends the prime $(\varphi(b) -
\alpha_j)$ in $K_n/E(\alpha_j)$.  Now, the prime $\fp$ extends the prime $(\varphi^{n+1}(a) - t)$ in
$E(\alpha_i)/E(t)$ and extends the prime
$(\varphi^{n+1}(b) - t)$ in $K_n/E(t)$, so we must have
$\varphi^{n+1}(a) = \varphi^{n+1}(b)$ (since $\fp$ can extend exactly
one prime in $K_n/E(t)$).  This means that $a = b$, by condition (1)
of Theorem \ref{main theorem}.  Thus, $\fp$ divides both $(\varphi(a)
- \alpha_i)$ and $(\varphi(a) - \alpha_j)$.  This means that
$(\alpha_i - \alpha_j) \in \fp$.  Since $K_n$ is a splitting field for
$\varphi^n(x) - t$, this implies that $\fp$ ramifies over $\fp \cap
E(t) = (\varphi^{n+1}(a) - t)$, which gives a contradiction, since
$\varphi^{n+1}(a)-t \neq \varphi^m(b)-t$ for any $m \leq n$, any $a\in
S$, and $b \in \varphi_{\fc}$ by \eqref{disc}.
\end{proof}

We now prove Theorem \ref{main theorem}.  
\begin{proof}[Proof of Theorem \ref{main theorem}]
  We will use induction to prove that $G_n \cong [G]^n$ for all $n
  \leq N$.  The case of $n=1$ is clear.  Let $n < N$ and suppose that $G_m \cong
  [G]^m$ for all $m\leq n$; we will show that $G_{n+1} \cong [G]^{n+1}$. First note that
  $E(\alpha_i) \cong E(t)$ so we have $\Gal(M_i/E(\alpha_i)) \cong
  \Gal(K_1/E(t)) \cong G$.

Elements of $\Gal(K_{n+1}/\widehat{M_i})$ and $\Gal(M_i/E(\alpha_i))$ are
determined by their actions on the roots of $\varphi(x)-\alpha_i$.  There
is a natural injective homomorphism from $\Gal(K_{n+1}/\widehat{M_i})$ to
$\Gal(M_i/E(\alpha_i))$ given by restriction of elements of
$\Gal(K_{n+1}/\widehat{M}_i) $ to $M_i$. Let $\Psi:\Gal(K_{n+1}/\widehat{M}_i)
\rightarrow \Gal(M_i/E(\alpha_i))$ be this map.  Let $\mathfrak{p}_1$
be any prime of $M_i$ dividing $\prod_{a \in S} (\varphi(a)-\alpha_i)$,
let $\mathfrak{p}:=\mathfrak{p}_1\cap E[\alpha_i]$, let
$\mathfrak{p}'$ be any extension of $\mathfrak{p}_1$ to $K_{n+1}$ and
let $\mathfrak{p}_2:=\mathfrak{p}'\cap \widehat{M_i}$. Then
$\Psi\big|_{I(\mathfrak{p}'|\mathfrak{p}_2)}
:I(\mathfrak{p}'|\mathfrak{p}_2) \rightarrow
I(\mathfrak{p}_1|\mathfrak{p})$ is an injective homomorphism of the
inertia group of $\mathfrak{p}'$ over $\mathfrak{p}_2$ to the inertia
group of $\mathfrak{p}_1$ over $\mathfrak{p}$. Since $\mathfrak{p}_2$
is unramified over $E(\alpha_i)$ by Lemma \ref{disjoint ramification},
Abhyankar's Lemma (see \cite[III.8.9, page 125]{stichtenoth}) implies
that $e(\mathfrak{p}'|\mathfrak{p})=e(\mathfrak{p}_1|\mathfrak{p})$
and hence
$e(\mathfrak{p}'|\mathfrak{p}_2)=e(\mathfrak{p}_1|\mathfrak{p})$. Thus,
$\left|I(\mathfrak{p}'|\mathfrak{p}_2)\right| =
\left|I(\mathfrak{p}_1|\mathfrak{p})\right|$ and
$\Psi\big|_{I(\mathfrak{p}'|\mathfrak{p}_2)}$ must be an isomorphism.

Consider $I \subseteq \Gal(M_i/E(\alpha_i))$, the subgroup generated by 
$\{I(\mathfrak{q}|\fq \cap E(\alpha_i)): \mathfrak{q} \in \mathbb{P}_{M_i}, \mathfrak{q} |\prod_{a \in S} (\varphi(a)-\alpha_i)\}$, 
and $I' \subseteq \Gal(K_{n+1}/\widehat{M_i})$, 
the subgroup generated by 
$\{I(\mathfrak{q}'|\fq' \cap \widehat{M_i}): \mathfrak{q}' \in \mathbb{P}_{K_{n+1}}, \mathfrak{q}' |\prod_{a \in S} (\varphi(a)-\alpha_i)\}$.  
Then $\Psi |_{I'} :I'\rightarrow I$ is an isomorphism. So
$\Gal(K_{n+1}/\widehat{M_i})$ contains an isomorphic copy of $I$. We
have $I \cong G$ by hypothesis so $\Gal(K_{n+1}/\widehat{M_i})$ contains an isomorphic copy of $G$. We also know that $\Gal(K_{n+1}/\widehat{M_i})$ is isomorphic to a subset of  $G$. It follows that $\Gal(K_{n+1}/\widehat{M_i})\cong G$.

Thus, we have  $$|G_{n+1}|=\prod_{i=0}^{n}\mid G\mid^{d^i} = |
[G]^{n+1} |.$$ By Lemma \ref{wreath subgroup}, $G_{n+1}$ is isomorphic to a
subgroup of $[G]^{n+1}$. Hence, $G_{n+1} \cong [G]^{n+1}$, as
desired.  
\end{proof}

\begin{remark} \label{all critical points}
If $S$ is the set of \emph{all} finite critical points of $\varphi$
then condition (2) of Theorem \ref{main theorem}
follows automatically.  To see this, let $I$ be the
  subgroup of $G$ generated by the ramification groups of all the
  critical points.  Then the fixed field $K_1^I$ is unramified
  everywhere over $E(t)$, so $K_1^I = E(t)$ since $E(t)$ has no
  unramified extensions of degree greater than one, by
  Riemann-Hurwitz. Thus, $I=G$ as desired. We use
this fact in the proof of Theorem~\ref{collide} 
\end{remark}

In Theorem \ref{main theorem}, $E$ was taken to be the algebraic closure of $k$ in $K_1$. In the following proposition we show that algebraic closure of the base field is a necessary condition for the iterated Galois groups to be the full iterated wreath products.

\begin{proposition}\label{not-w}  Let $H = \Gal(K_1/k(t))$.
  Suppose that $k$ is not algebraically closed in $K_1$.  Then
  $\Gal(K_n/k(t))$ is a proper subgroup of $[H]^n$ for $n>1$.
\end{proposition}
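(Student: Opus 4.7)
The plan is to compare orders. Set $E := K_1 \cap \overline{k}$, so that $[E:k] \geq 2$ by hypothesis, and let $G := \Gal(K_1/E(t))$. Then $E$ is algebraically closed in $K_1$, and restriction of automorphisms yields a short exact sequence
\[
1 \lra G \lra H \lra \Gal(E/k) \lra 1,
\]
so $|H| = [E:k] \cdot |G|$.

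First I would verify that $\Gal(K_n/k(t))$ embeds in $[H]^n$ by repeating the proof of Lemma \ref{wreath subgroup} with the base field $k$ in place of $E$. The only thing to check is the hypothesis of Lemma \ref{wreath product}: for each root $\alpha$ of $\varphi^{m}(x) - t$ with $m < n$, the splitting field of $\varphi(x) - \alpha$ over $k(\alpha)$ must have Galois group isomorphic to $H$. Since $\alpha$ is transcendental over $k$ (as $t = \varphi^{m}(\alpha)$ is), sending a new indeterminate $t'$ to $\alpha$ defines a $k$-isomorphism $k(t') \xrightarrow{\sim} k(\alpha)$ that carries $\varphi(x) - t'$ to $\varphi(x) - \alpha$, identifying the relevant Galois groups with $H$.

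Next I would count. Set $N_n := (d^n-1)/(d-1) = 1 + d + \dots + d^{n-1}$. Since $E(t) \subseteq K_n$ and $[E(t):k(t)] = [E:k]$,
\[
|\Gal(K_n/k(t))| = [E:k] \cdot |\Gal(K_n/E(t))|,
\]
and Lemma \ref{wreath subgroup} applied over $E$ gives $|\Gal(K_n/E(t))| \leq |[G]^n| = |G|^{N_n}$. On the other hand,
\[
|[H]^n| = |H|^{N_n} = [E:k]^{N_n} \cdot |G|^{N_n}.
\]

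For $n > 1$ one has $N_n \geq 1 + d \geq 3$, and since $[E:k] \geq 2$ this forces $[E:k]^{N_n} > [E:k]$ strictly. Combining the two estimates,
\[
|\Gal(K_n/k(t))| \leq [E:k] \cdot |G|^{N_n} < [E:k]^{N_n} \cdot |G|^{N_n} = |[H]^n|,
\]
so the embedding from the first step cannot be surjective and $\Gal(K_n/k(t))$ is a proper subgroup of $[H]^n$. The only substantive step is the adaptation of Lemma \ref{wreath subgroup} to the base field $k$; everything else is a counting argument driven by the exponent $N_n$ exceeding $1$.
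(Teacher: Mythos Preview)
Your argument is correct and close in spirit to the paper's, but the organization differs in a way worth noting. Both proofs are order counts driven by the strict inequality $|G|<|H|$. The paper works layer by layer: since $E\subset K_n$ one has $E(\alpha_i)\subset\widehat{M_i}$, so each $\Gal(K_{n+1}/\widehat{M_i})$ embeds in $\Gal(M_i/E(\alpha_i))\cong G$ rather than $H$; hence $|\Gal(K_{n+1}/K_n)|\le |G|^{d^n}<|H|^{d^n}$, and multiplying by $|\Gal(K_n/k(t))|\le |[H]^n|$ gives the strict bound. You instead factor globally through $E(t)$, writing $|\Gal(K_n/k(t))|=[E:k]\cdot|\Gal(K_n/E(t))|$ and invoking Lemma~\ref{wreath subgroup} as a black box to bound the second factor by $|G|^{N_n}$. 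Your route is arguably cleaner, since it avoids reopening the $\widehat{M_i}$ machinery and makes the single lost factor of $[E:k]^{N_n-1}$ visible; the paper's route, on the other hand, localizes the loss to the top layer and connects more directly with the inductive structure used in Theorem~\ref{main theorem}. Both need the embedding of $\Gal(K_n/k(t))$ into $[H]^n$, which you correctly flag as requiring the $k$-analogue of Lemma~\ref{wreath subgroup}; the paper leaves this implicit in the phrase ``proper subgroup.''
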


\begin{proof} If $k$ is not algebraically closed in $K_1$ then $k(t)$
  is a proper subfield of $E(t)$ so $G$ is a proper subgroup of
  $H$. Thus, we have $|G|<|H|$. Now note that $E \subset K_n$ for
  $n\geq 1$ so $E(\alpha_i) \subset \widehat{M_i}$. Then
  $\Gal(K_{n+1}/\widehat{M_i})$ is isomorphic to a subgroup of 
  $\Gal(M_i/E(\alpha_i))\cong G$. Therefore, $|\Gal(K_{n+1}/K_n)|<|H|^{d^n}$
  and 
\[ |\Gal(K_{n+1}/k(t))|<|[H]^n|,\]
as desired.
\end{proof}

\section{Specializations of Galois groups} \label{special}
Our main results will involve working over Galois extensions of
function fields whose fields of constants are number fields and
reducing modulo primes of the number fields. The notion of
specializing Galois groups is most easily stated in a great deal of
generality, so we work over Noetherian integral domains here, rather
than merely over rings of integers in number fields.  

Throughout out this section, we let $F(D)$ denote the field of fractions
of $D$ for an integral domain $D$.

Let $R$ be a Noetherian integral domain of characteristic 0 and let $A$ be a finitely
generated $R$-algebra that is an integrally closed domain.  Let $h(x) =
\sum_{i=1}^d a_i x^i \in A[x]$ be a
nonconstant polynomial that is irreducible in $F(A)[x]$. Let $B
= A[\theta_1, \dots, \theta_n]$ where $\theta_i$ are the roots of $h$
in some splitting field for $h$ over $F(A)$.  We let $X$ denote $\Spec
A$ and let $Y$ denote $\Spec B$.  For any prime $\fp$ of $R$, we let
$X_\fp$ (resp. $Y_\fp$) denote the fiber $X \times_{\Spec R}
F(R/\fp)$ (resp.  $Y \times_{\Spec R}
F(R/\fp)$).  We let $(0)$ denote the zero ideal in $R$.  Note that
since $R$ is an integral domain, $(0)$ is Zariski dense in $\Spec R$.
In particular, any constructible subset of $\Spec R$ that contains
$(0)$ must be Zariski dense and open.  

Suppose that $F(R)$ is algebraically closed in both $F(A)$ and $F(B)$
(this is a crucial assumption, see Remark \ref{geo}).  Then, since $A$ and
$B$ have characteristic $0$ we see that $X_{(0)}$ and $Y_{(0)}$ are
both  geometrically integral $F(R)$-schemes (see \cite[Proposition
5.5.1]{GW}, for example); in other words, $A \otimes_{F(R)} k'$ and
$B \otimes_{F(R)} k'$ are integral domains for any algebraic extension
$k'$ of $F(R)$.   Hence,
by \cite[9.7.7]{EGAIV}, we see that the set of $\fp \in \Spec R$ such
that $X_\fp$ and $Y_\fp$ are geometrically integral forms a Zariski
dense open subset of $\Spec R$.   Thus, if we let $W_1$ denote the set of $\fp \in \Spec R$ such that
$A/\fp A \otimes_R F(A/\fp)$ and $B/\fp B \otimes_R F(B/\fp)$ are
integral domains, then $W_1$ is a Zariski dense open subset of $\Spec
R$.  

Let $Z_2$ be the set of primes of $A$ that do not contain $a_d$, the
leading coefficient of $h$.  Then $Z_2$ is a Zariski dense open subset
of $\Spec A$.  Let $\pi_{AR}: \Spec A \lra \Spec R$ be the map induced by
the inclusion of $R$ into $A$ and let $W_2 = \pi_{AR}(Z_2)$.   Then by Chevalley's theorem on images of constructible
sets (see \cite[Theorem 10.70]{GW}, for example), $W_2$ must be a
constructible subset of $\Spec R$; since this subset contains the zero
ideal, it must therefore be open and dense.  Likewise, for each $i \not= j$, the set of primes $U_{ij}$
of $\fp$ in $\Spec B$ that do not contain $\theta_i - \theta_j$ form
a Zariski dense open subset of $\Spec B$.  Chevalley's theorem thus
implies that there is a Zariski dense open subset $W_3 \subseteq \Spec
R$ such that for all $\fp \in W_3$ and any $i \not= j$, we have
$r_\fp(\theta_i) \not= r_\fp(\theta_j)$.   Let $W = W_1 \cap W_2 \cap
W_3$.

Now, let $\fp \in W$.  We let $(A)_\fp$ and $(B)_\fp$ denote $A/\fp A
\otimes_R F(A/\fp)$ and $B/\fp B \otimes_R F(B/\fp)$, respectively.
We let $h_\fp$ denote the image of $h \in (A)_\fp[x]$ under the
reduction map from $A$ to $(A)_\fp$.  We let $r_\fp$ denote the
reduction map from $B$ to $(B)_\fp$.  Since $r_\fp$ is a homomorphism
of rings, it is clear that if $\theta_i$ is a root of $h$, then
$r_\fp(\theta_i)$ is root of $h_\fp$; furthermore, $h_\fp$ splits into
distinct linear factors in $F(B/\fp B)[x]$, since $h$ splits into
distinct factors in $B[x]$ and $r_\fp(\theta_i) \not= r_\fp(\theta_j)$
for all $i \not= j$.  Thus, $F( (B)_\fp )$ is a splitting field for
$h_\fp$ over $F( (A)_\fp)$ so $F( (B)_\fp )$ is a Galois extension of
$F( (A)_\fp )$, and we have $[F( (B)_\fp ): F( (A)_\fp )] = \#\Gal
(h_\fp(x) / F( (A)_\fp ))$

Now, given any $\sigma \in \Gal(h(x)/F(A))$, we see that $\sigma: B
\lra B$ since $\sigma$ permutes the $\theta_i$, all of which are in
$B$.  Since $\sigma$ acts identically on $R$, and thus on $\fp$, we
see that $\sigma$ is an automorphism of $R$-algebras and that
$\sigma(\fp B) = \fp B$.  Thus, $\sigma$ induces a homomorphism
$\sigma_\fp: (B)_\fp \lra (B)_\fp$.  If
$\sigma \tau$ is the identity on $B$ for $\tau \in \Gal(F(B)/F(A))$, then clearly $\sigma_\fp
\tau_\fp$ is the identity on $(B)_\fp$, so $\sigma_\fp$ is an
automorphism of $(B)_\fp$.  It extends to an automorphism of
$F((B)_\fp)$, because $(B)_\fp$ is an integral domain.  Thus, we have
a homomorphism
\[ \rho_\fp: \Gal(h(x) / F(A)) \lra \Gal (h_\fp(x) / F((A)_\fp)) \]
with the property that
\[ \rho_\fp(\sigma)(r_\fp(\theta_i)) = r_\fp (\sigma(\theta_i)) \]
for all $\sigma \in \Gal(h(x)/ F(A))$ and all roots $\theta_i$ of $h$
in $B$.

\begin{proposition} \label{EGA}
For all $\fp \in  W$ we have the following:
\begin{enumerate}
\item[(i)] $r_\fp$ induces a bijection between the roots of $h$ and the
  roots of $h_\fp$; and 
\item[(ii)] $\rho_\fp$ is an isomorphism of groups; 
\end{enumerate}

\end{proposition}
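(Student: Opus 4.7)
The proof divides into (i), an elementary root-counting argument, and (ii), whose injectivity is immediate from (i) but whose surjectivity requires the main work. Write $G = \Gal(h(x)/F(A))$ throughout. For (i), the polynomial $h$ has exactly $d$ distinct roots $\theta_1, \ldots, \theta_d$ in $B$ (distinct because $h$ is separable in characteristic zero). Each $r_\fp(\theta_i)$ is a root of $h_\fp$ since $r_\fp$ is a ring homomorphism. The condition $\fp \in W_2$ prevents the leading coefficient $a_d$ from vanishing in $(A)_\fp$, so $h_\fp$ still has degree $d$ and hence at most $d$ roots. The condition $\fp \in W_3$ forces these $r_\fp(\theta_i)$ to be pairwise distinct, so a count yields (i).

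For (ii), injectivity of $\rho_\fp$ is immediate from (i): if $\rho_\fp(\sigma) = \mathrm{id}$, then $r_\fp(\sigma(\theta_i)) = \rho_\fp(\sigma)(r_\fp(\theta_i)) = r_\fp(\theta_i)$ for every $i$, so $\sigma(\theta_i) = \theta_i$; since $B = A[\theta_1, \ldots, \theta_d]$ and $\sigma$ already fixes $A$, we conclude $\sigma = \mathrm{id}$. For surjectivity I would bound $[F((B)_\fp) : F((A)_\fp)] \leq |G|$. Choose a primitive element $\beta \in B$ with $F(B) = F(A)(\beta)$ that is integral over $A$; this is possible because each $a_d \theta_i$ is integral over $A$ (from the relation $a_d h(\theta_i) = 0$ written in the variable $a_d \theta_i$) and a generic $A$-linear combination of them is both integral and primitive. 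Then $m(x) = \prod_{\sigma \in G}(x - \sigma\beta)$ is monic of degree $|G|$ with coefficients that are integral over $A$ and lie in $F(A)$ (being $G$-invariant), hence in $A$ by integral closedness. Reducing, $r_\fp(m) \in (A)_\fp[x]$ is monic of degree $|G|$ and has $r_\fp(\beta)$ as a root, so $[F((A)_\fp)(r_\fp(\beta)) : F((A)_\fp)] \leq |G|$. It remains to show $F((A)_\fp)(r_\fp(\beta)) = F((B)_\fp)$.

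The principal obstacle is this last identification. Since $\theta_i \in F(A)(\beta)$, one may write $\theta_i = P_i(\beta)/Q_i(\beta)$ with $P_i, Q_i \in A[x]$ and $Q_i(\beta) \neq 0$ in $B$; the identification holds provided $r_\fp(Q_i(\beta)) \neq 0$ in $(B)_\fp$ for each $i$. Each condition of the form ``a fixed nonzero element of $B$ reduces to a nonzero element of $(B)_\fp$'' fails only on a constructible proper subset of $\Spec R$ not containing the generic point $(0)$, by the same Chevalley-image argument already used to construct $W_2$ and $W_3$; intersecting $W$ with the complements of finitely many such loci preserves Zariski density and supplies the needed nonvanishing. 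Combining the bound $[F((B)_\fp) : F((A)_\fp)] \leq |G|$ with the injectivity of $\rho_\fp$ forces equality, whence $\rho_\fp$ is an isomorphism.
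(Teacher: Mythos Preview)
Your treatment of (i) and of the injectivity in (ii) is correct and essentially identical to the paper's.

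For surjectivity of $\rho_\fp$ the paper takes a shorter route: it invokes \cite[Lemma~2.4]{odoni}, a general specialization lemma asserting that if a ring homomorphism carries $h$ to a separable polynomial of the same degree, then the Galois group of the image embeds as a permutation group into $\Gal(h/F(A))$. Applied to the map from a suitable localization $C$ of $A$ (chosen so that $a_d$ is a unit in $C$, which is possible for each $\fp\in W$ because $\fp\in W_2=\pi_{AR}(Z_2)$) to the fiber over $\fp$, this yields
\[
\#\Gal\bigl(h_\fp/F((A)_\fp)\bigr)\ \le\ \#\Gal\bigl(h/F(A)\bigr)=|G|
\]
for \emph{every} $\fp\in W$; together with injectivity this finishes the proof.

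Your primitive-element approach, by contrast, does not prove the proposition as stated. You correctly obtain $[F((A)_\fp)(r_\fp(\beta)):F((A)_\fp)]\le |G|$, but the identification $F((A)_\fp)(r_\fp(\beta))=F((B)_\fp)$ can genuinely fail for some $\fp\in W$: nothing in the definition $W=W_1\cap W_2\cap W_3$ controls whether your particular denominators $Q_i(\beta)$ survive reduction. Your proposed remedy---excising finitely many further closed loci via Chevalley---is correct and produces a dense open $W'\subseteq W$ on which the conclusion holds. Since every downstream use of this proposition in the paper (Proposition~\ref{for_use}, Theorem~\ref{generic}) requires only \emph{some} dense open set of good primes, your argument would suffice for those applications; but it is not a proof of the proposition for the specific $W$ named in its hypothesis. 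The gap is real, though harmless in context, and the paper's citation of Odoni's lemma is precisely what closes it without any further shrinking.
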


\begin{proof}
  Let $\fp \in W$.  
  Since $r_\fp(\theta_i) \not= r_\fp(\theta_j)$ for all $i \not= j$,
  we see that (i) follows immediately.
 
  Let $\sigma$ be a nonidentity element of $\Gal(h(x) / F(A))$.  Then,
  for some $\theta_i$ we have $\sigma(\theta_i) = \theta_j$ for some
  $\theta_j \not= \theta_i$.  Since $r_\fp(\theta_i) \not=
  r_\fp(\theta_j)$ for any $\theta_i \not= \theta_j$, it follows that
  $\rho_\fp(\sigma)(r_\fp(\theta_i)) \not= r_\fp(\theta_i)$, so
  $\rho_\fp(\sigma)$ is not the identity.  Thus, $\rho_\fp$ must be
  injective.
  
  As before, we let $\pi_{AR}: \Spec A \lra \Spec R$ be the map
  induced by the inclusion of $R$ into $A$.  Let $\Spec C$ be an open
  affine subset of $\pi_{AR}^{-1}(W)$ such that $\pi_{AR}(\Spec C)$
  contains $\fp$.  Then, since $h_\fp$ is
  separable (because the $r_\fp(\theta_i)$ are distinct) and $a_d$ is
  a unit in $C$, we have
\begin{equation}\label{deg}
\#\Gal (h_\fp(x) / F( (A)_\fp )) \leq \#\Gal( h(x) / F(A)).
\end{equation} 
by \cite[Lemma 2.4]{odoni}.  It follows that $\rho_\fp$ is surjective
and is therefore an isomorphism
of groups.   
 
\end{proof}

\begin{remark}\label{geo}
    When $F(R)$ is not algebraically
  closed in $F(B)$, many of the arguments in this section do not work.
  For example, if $R = \bZ$, $A= \bZ[t]$, and $B = \bZ[\sqrt[3]{t},
  \xi_3]$, for $\xi_3$ a cube root of unity, then the Galois group of
  $F(B)$ over $F(A)$ has order 6, but when one mods out by a prime $p
  \equiv 1 \pmod{3}$, one does not obtain an integral domain.  This
  explains why $\Gal((x^3 - t) / \bQ)$ can have order 6, even though are
  infinitely many $p$ such that $\Gal((x^3 - t) / \F_p)$ has order 3.
  Note that we still have $\# \Gal((x^3 - t) / \F_p)
  \leq \# \Gal((x^3 - t) / \bQ)$ for all $p \not= 3$, as in \cite[Lemma 2.4]{odoni}.
\end{remark}

\section{The Chebotarev density theorem for function fields}\label{CS}

We begin by showing that if $\fp$ is a prime of good reduction for
$\varphi$, then the number of periodic points for $\varphi_\fp$
is bounded above by $\# \varphi_\fp^n (\bP^1(\F_q))$ for any $n$, where $\F_q$ is the
residue field of $\fp$.   This follows from a very general principle,
which we now prove.  

\begin{definition} Let $T:\cU\rightarrow \cU$ be any map of a set $\cU$ to itself. For $u \in \cU$ define $T^0(u)=u$ and $T^n=T(T^{n-1}(u))$. We say that $u$ is \emph{periodic} if $T^k(u)=u$ for some $k \in \mathbb N$ and we say $u$ is \emph{preperiodic} if $T^k(u)$ is periodic for some $k \in \mathbb Z_{\geq 0} $. We denote the set of periodic points $\Per(T)$ if the set $\cU$ is clear from the context. 
\end{definition}

\begin{lemma}\label{S}  If $\cU$ is finite then every point of $\cU$ is
  preperiodic and $\Per(T)=\cap_{n=0}^\infty T^n(\cU)$. In
  particular, $\#\Per(T) \leq \# T^n(\cU)$ for any positive
  integer $n$.  
\end{lemma}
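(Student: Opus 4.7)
The plan is to prove the two equalities by elementary set-theoretic arguments, exploiting only the finiteness of $\cU$ and the monotonicity of the sequence $T^n(\cU)$. The ``in particular'' statement is then immediate.

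First I would verify that every point is preperiodic. Given $u \in \cU$, the infinite sequence $u, T(u), T^2(u), \ldots$ lies in the finite set $\cU$, so by pigeonhole there exist integers $0 \leq i < j$ with $T^i(u) = T^j(u)$. Then $T^i(u)$ satisfies $T^{j-i}(T^i(u)) = T^i(u)$, so $T^i(u) \in \Per(T)$ and $u$ is preperiodic.

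Next I would establish the equality $\Per(T) = \bigcap_{n=0}^\infty T^n(\cU)$. The inclusion $\Per(T) \subseteq \bigcap_n T^n(\cU)$ is immediate: if $T^k(u)=u$ with $k \geq 1$, then for any $n$ choose $m$ with $mk \geq n$, giving $u = T^{mk}(u) = T^n(T^{mk-n}(u)) \in T^n(\cU)$. For the reverse inclusion, the key observation is that the chain
\[
\cU \supseteq T(\cU) \supseteq T^2(\cU) \supseteq \cdots
\]
is a descending chain of finite sets, hence stabilizes: there exists $N$ with $V := T^N(\cU) = T^n(\cU)$ for all $n \geq N$, and clearly $V = \bigcap_{n=0}^\infty T^n(\cU)$. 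Applying $T$ to both sides of $T^N(\cU) = T^{N+1}(\cU)$ gives $T(V) = V$, so $T$ restricts to a surjection $V \to V$ of a finite set to itself, and therefore to a bijection. Any bijection of a finite set has finite order, so every element of $V$ is periodic, which gives $\bigcap_n T^n(\cU) \subseteq \Per(T)$.

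Finally, the inequality $\#\Per(T) \leq \#T^n(\cU)$ follows since $\Per(T) = \bigcap_{m} T^m(\cU) \subseteq T^n(\cU)$ for every $n$. No step here looks like a real obstacle; the only mild subtlety is recognizing that surjectivity of $T$ on the stable set $V$ upgrades to bijectivity, which is what forces periodicity rather than merely preperiodicity on $V$.
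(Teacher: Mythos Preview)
Your proof is correct. The preperiodicity argument and the inclusion $\Per(T)\subseteq\bigcap_n T^n(\cU)$ match the paper's proof essentially verbatim. For the reverse inclusion, however, you take a genuinely different route: you observe that the descending chain $\cU\supseteq T(\cU)\supseteq T^2(\cU)\supseteq\cdots$ stabilizes at some $V$, that $T|_V$ is then a surjective self-map of a finite set and hence a bijection of finite order, so every point of $V$ is periodic. The paper instead argues pointwise: given $u\in\bigcap_n T^n(\cU)$, it chooses preimages $a_i$ with $T^i(a_i)=u$, applies pigeonhole to find $a_i=a_j$ with $i<j$, and concludes $T^{j-i}(u)=u$. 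Your structural argument is slightly cleaner and yields the bonus that all of $V$ shares a common period (the order of $T|_V$), while the paper's argument is more direct and avoids invoking stabilization. Both are equally elementary.
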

\begin{proof}
Suppose that $\cU$ is finite and let $u \in \cU$. Then by the pigeonhole principle $\exists m,n\in\mathbb{N}$
such that $T^m(u) = T^n(u)$, so $u$ is preperiodic.

Suppose that $u \in \cU$ is periodic.  Write $T^i(u) = u$ for
some $i > 0$. Then $u \in T^{ik}(\cU)$ for all $k > 0$. Since $T^{in}(\cU)=T^n(T^{n(i-1)})$, we have that $T^{in}(\cU) \subseteq T^{n}(\cU)$, so $u\in T^n(\cU)$ for every $n$.

Suppose that $u\in\cap_{n=0}^\infty T^n(\cU)$. Then we may form a sequence $$\{T(a_1), T^2(a_2), T^3(a_3),\dots\}$$ such that $\forall i,T^i(a_i)=u$. Since $U$ is finite, the pigeonhole principle gives that $\exists i,j$ with $j>i$ such that $a_i=a_j$. Then $u$ is periodic, as $$u=T^{j}(a_j)=T^{j-i}(T^i(a_j))=T^{j-i}(T^i(a_i))=T^{j-i}(u).$$
\end{proof}

In order to apply the Chebotarev density theorem for function fields \cite{murty},
we establish further notation.  Let $L$ be a function field over a
finite field $\F_q$, and let $M$ be a finite extension of $L$. Let
$\alpha$ be a degree one prime in $L$, that is a prime whose residue
field is $\F_q$.  Suppose that $\alpha$ does not ramify in $M$.  Then
for each prime $\gamma$ in $M$ lying over $\alpha$, there is a unique
Frobenius element $\Frob(\gamma/\alpha)$ such that
$\Frob(\gamma/\alpha)$ fixes $\gamma$ and acts as $x \mapsto x^q$ on
the residue field $\ell_\gamma$ of $\gamma$.  We let $\Frob(\alpha)$
denote the conjugacy class of $\Frob(\gamma/\alpha)$ in $\Gal(M/L)$
(note that elements of this conjugacy class correspond to
$\Frob(\gamma'/\alpha)$ as $\gamma'$ ranges over all primes of $M$
lying over $\alpha$).

\begin{proposition}\label{for_use}
  Let $k$ be a number field, let $K = k[t]$, and let
  $\varphi: \bP_k^1\to \bP_k^1$ be a rational function. Let $n\in\mathbb{Z}^+$ and $K_n$ be a splitting field of $\varphi^n(x)
  - t$ over $K$ for some $n$, and let $G_n$ be $\Gal(K_n/K)$.  Suppose that $k$ is algebraically
  closed in $K_n$.  Let $\delta > 0$.  Then there is a constant
  $M_\delta$ such that for all $\fp$ with $\Norm(\fp) > M_\delta$, we have
  \begin{equation}\label{proportion}
  \frac{\#\Per(\varphi_\fp)}{\Norm(\fp) + 1} \leq \FPP(G_n) + \delta.
  \end{equation}
\end{proposition}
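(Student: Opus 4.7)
The plan is to control $\#\Per(\varphi_\fp)$ by the size of the image $\varphi_\fp^n(\bP^1(\F_q))$, where $\F_q = \fo_k/\fp$, and then count that image via an effective Chebotarev density theorem applied to the specialization of $K_n/k(t)$ modulo $\fp$.

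First, by Lemma~\ref{S} applied to $T = \varphi_\fp$ on the finite set $\bP^1(\fo_k/\fp)$, we have
\[
\#\Per(\varphi_\fp) \;\leq\; \#\varphi_\fp^n\bigl(\bP^1(\fo_k/\fp)\bigr).
\]
A point $s \in \bP^1(\F_q)$ lies in this image precisely when $\varphi_\fp^n(x) = s$ has an $\F_q$-solution. Interpreting $(t - s)$ as a degree one place of $\F_q(t)$, this is equivalent to saying that some element of the Frobenius conjugacy class at $(t-s)$, acting on the roots of the specialization of $\varphi_\fp^n(x) - t$, has a fixed root. Thus we reduce to counting degree one places of $\F_q(t)$ whose Frobenius class intersects the set
\[
C := \{\,\sigma \in G_n : \sigma \text{ fixes at least one root of } \varphi^n(x) - t\,\},
\]
which is a union of conjugacy classes of proportion $\FPP(G_n)$ in $G_n$.

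Second, apply Proposition~\ref{EGA} with $R = \fo_k$, $A = \fo_k[t]$, and $B$ the ring generated over $A$ by the roots of $\varphi^n(x) - t$, using the hypothesis that $k$ is algebraically closed in $K_n$ to ensure $F(R)$ is algebraically closed in $F(A)$ and $F(B)$. This produces a Zariski dense open $W \subseteq \Spec \fo_k$ such that for $\fp \in W$, reduction modulo $\fp$ induces a group isomorphism $G_n \cong \Gal(\overline{K_n}_\fp/\F_q(t))$ compatible with the action on roots, so the set $C$ transports to its counterpart in the specialized Galois group and the residue extension of $\fo_k/\fp$ is geometrically integral. Since we only need the conclusion for $\Norm(\fp)$ sufficiently large, excluding the complement of $W$ is harmless.

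Third, invoke the Murty--Scherk effective Chebotarev density theorem \cite{murty} for the Galois extension of $\F_q(t)$ realized by the specialization of $K_n$. It yields
\[
\#\{\alpha \text{ degree one place of } \F_q(t) : \alpha \text{ unramified},\ \Frob(\alpha) \subseteq C\} \;=\; \FPP(G_n)\,(q+1) + O_{K_n}\!\left(\sqrt{q}\right),
\]
where the implicit constant depends only on the genus of $K_n$ and on $[K_n : k(t)]$, both of which are bounded uniformly in $\fp$ once $\fp$ has good reduction and lies in $W$. Adding the $O(1)$ contribution from degree one places that ramify in $K_n$, we obtain
\[
\#\varphi_\fp^n\bigl(\bP^1(\F_q)\bigr) \;\leq\; \FPP(G_n)\,(q+1) + O\!\left(\sqrt{q}\right).
\]
Dividing by $q+1 = \Norm(\fp) + 1$ and choosing $M_\delta$ so that the error term is less than $\delta$ whenever $\Norm(\fp) > M_\delta$ gives the desired bound.

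The only substantive technical point is the uniformity of the Chebotarev error term: the genus of the specialized splitting field is bounded by the genus of $K_n$ (genera do not grow under good reduction), while the degree of the extension is unchanged by Proposition~\ref{EGA}, so the constant in the Murty--Scherk estimate can be taken independent of $\fp$.
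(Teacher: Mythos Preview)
Your proposal is correct and follows essentially the same route as the paper: bound $\#\Per(\varphi_\fp)$ by $\#\varphi_\fp^n(\bP^1(\F_q))$ via Lemma~\ref{S}, identify image points with places whose Frobenius lies in the fixed-point set, specialize the Galois action using Proposition~\ref{EGA}, and apply the Murty--Scherk effective Chebotarev bound. The only notable difference is in justifying the uniformity of the error term: the paper bounds $g_{(K_n)_\fp}$ directly by Riemann--Hurwitz (using that the number of branch points is at most $n(2\deg\varphi-2)$ and that there is no wild ramification once $\ch(\fo_k/\fp)>\deg\varphi$), whereas you invoke constancy of the genus under good reduction of the curve associated to $K_n$; both are valid, though the paper's argument is more self-contained and avoids having to check that the splitting-field curve itself has good reduction.
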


\begin{proof} 

  Let $\fp \in \Spec \fo_k$ be a prime of good reduction for $\varphi$
  such that we have $\Gal((K_n)_\fp/(K)_\fp)\cong G_n$
  and let $\F_q$ denote its residue field $\fo_k / \fp$.  We let
  $\varphi_\fp$ denote the reduction of $\varphi$ modulo $\fp$ and let
  $(K_n)_\fp$ denote the splitting field of $\varphi_\fp^n(x) - t$. Let
  $z$ be a root of $\varphi_\fp^n(x) - t$ in $(K_n)_\fp$ and let $\cS$
  denote the conjugates of $z$ in $(K_n)_\fp$.  Then the map $\varphi_\fp:
  \bP_{\F_q}\to \bP_{\F_q}$ is induced by the inclusion of $(K)_\fp$ into $(K)_\fp(z)$.
  Let $A_n$ be the integral closure of $\F_q[t]$ in $(K_n)_\fp$.  Then
  $A_n^{G_n} = \F_q[t]$; that is, $\F_q[t]$ is the set of elements of $A_n$
  that are fixed by every element of $G_n$.  Now, let $(t-\xi)$ be a
  degree one prime in $F_q[t]$ that does not ramify in
  $(K_n)_\fp$, and let $D(\fm / (t-\xi))$ be the decomposition group of a prime
  $\fm$ in $(K_n)_\fp$ that lies over $(t-\xi)$.  Then, by Lemma 3.2 of
  \cite{GTZ}, the number of degree one primes $\beta$ in $(K)_p(z)$
  lying over $(t-\xi)$ is equal to the number of fixed points of
  $D(\fm / (t-\xi))$ acting on $\cS$.  Likewise, working with the
  integral closure $A'_n$ of $\F_q[\frac{1}{t}]$ in $(K_n)_\fp$, we see
  that if $\tau$ is the prime at infinity in $\F_q(t)$ (that is the prime
  $(\frac{1}{t})$ in $\F_q[\frac{1}{t}]$) and $\tau$ does not ramify in
  $(K_n)_\fp$, then the number of degree one primes in $(K)_\fp(z)$ lying
  over $\tau$ is equal to the number of fixed points of $D(\fm /
  \tau)$ acting on $\cS$, where $\fm$ is a prime of $A'_n$ lying over
  $\tau$.  Since decomposition groups over unramified primes are
  generated by Frobenius elements, we see that for any $\alpha
  \in \bP^1(\F_q)$ that does not ramify in $(K_n)_\fp$, we have
\begin{equation}\label{Frob2}
\begin{split}
\text{$\exists\beta \in \bP^1(\F_q)$ such that $\varphi_\fp^n(\beta) = \alpha$}
 \Leftrightarrow
\text{$\Frob(\alpha)$ has a fixed point in $\cS$}.
\end{split}
\end{equation}


Since any dense open subset of $\Spec \fo_k$ contains all but finitely
many primes in $\Spec \fo_k$, it thus follows from Proposition~\ref{EGA}
that for all but finitely many $\fp$, the action of $\Gal( (K_n)_\fp /
(K)_\fp)$ on $\cS$ is isomorphic to the action of $G_n$ on the roots
of $\varphi^n(x) - t$.  For any conjugacy class $C$ of $G_n$, we let
$\psi_C$ denote the number of degree one primes $\alpha$ of
$\bP^1_{\F_q}$ such that $\alpha$ does not ramify in $(K_n)_\fp$ and such
that $\Frob(\alpha) = C$.  Then \cite[Theorem 1]{murty} states that
\begin{equation}\label{MS}
  \left|  \psi_C - (q+1) \frac{\#C}{\#G_n}  \right| \leq  2
  g_{(K_n)_\fp}\frac{\#C}{\#G_n}  + \#R 
\end{equation}
where $g_{(K_n)_\fp}$ is the genus of $(K_n)_\fp$ and $R$ is the set of primes
of $\bP^1_{\F_q}$ that ramify in $(K_n)_\fp$.  Let $\Fix(G_n)$ be the set of
elements of $G_n$ that fix an element of $\cS$.  Then
$\frac{\#\Fix(G_n)}{\#G_n} = \FPP(G_n)$, and for any $\alpha$ outside
of $R$, there is a $\beta$ in $\bP^1(\F_q)$ such that $\varphi_\fp^n(\beta)
= \alpha$ if and only if $\Frob(\alpha) \subseteq \Fix(G_n)$ by
\eqref{Frob2}.  There are at most $\#R$ ramified primes $\alpha$ of $\F_q$ such
that $\alpha \in \varphi_\fp^n(\bP^1(\F_q))$.  Thus,  summing the estimates
in \eqref{MS} over all conjugacy classes in $\Fix(G_n)$ and diving by $q+1$, we then obtain
\begin{equation}\label{again}
  \frac{\varphi_\fp^n(\bP^1(\F_q))}{q+1}  \leq \FPP(G_n) + \frac{\#G_n g_{K_n}}{q + 1} + \frac{2 \#R}{q+1}
\end{equation}
 
The set of primes over which $\varphi_\fp^n$ ramifies has size at most $n (2
\deg \varphi  - 2)$ since $\varphi_\fp$ ramifies over at most $(2 \deg \varphi - 2)$
points and $\varphi_\fp^n$ can only ramify over these points and their first
$n-1$ iterates under $\varphi_\fp$. (Note that $\deg \varphi_\fp = \deg \varphi$
since $\varphi$ has good reduction at $\fp$.)  The size of $G_n$ can be bounded in
terms of $n$ and $d$ only, since it is a subgroup of the symmetric
group on $d^n$ elements. 

Thus, for any $\fp$ of characteristic
greater than $\deg \varphi$ (this guarantees that there is no wild
ramification at for $\varphi_\fp$), we see that $g_{(K_n)_\fp}$ can be bounded
in terms of $\deg \varphi_\fp$ and $n$ by Riemann-Hurwitz; for example,
\[g_{(K_n)_\fp} \leq |G_n| n (2 \deg \varphi_\fp - 2).\]  Hence, by \eqref{again}
there is an $M_\delta$ such that for all $\fp$ with $\Norm(\fp) \geq M_\delta$, we
have
\begin{equation*}
  \frac{\varphi_\fp^n(\bP^1(\F_q))}{q+1}  \leq \FPP(G_n) + \delta.
\end{equation*}
Applying Lemma \ref{S} then finishes our proof.

\end{proof}

We immediately deduce the following as a consequence Proposition \ref{for_use}.  

\begin{corollary}\label{zero}
With notation as in Proposition \ref{for_use}, suppose that $k$ is
algebraically closed in $K_n$ for all $n$.  Then, if $\lim_{n \to
  \infty} \FPP(G_n) = 0$, we have 
\begin{equation}\label{ze} \lim_{\Norm(\fp) \to \infty} \frac{\#\Per(\varphi_\fp)}{\Norm(\fp) +
  1} = 0
\end{equation}
\end{corollary}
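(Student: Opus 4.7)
The plan is a straightforward $\epsilon/2$ argument combining the hypothesis with Proposition~\ref{for_use}. Fix $\epsilon > 0$. Since by assumption $\FPP(G_n) \to 0$ as $n \to \infty$, I can choose some $n = n(\epsilon)$ large enough that $\FPP(G_n) < \epsilon/2$. The hypothesis that $k$ is algebraically closed in $K_n$ for \emph{all} $n$ is exactly what is needed to apply Proposition~\ref{for_use} at this particular $n$.

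Next, I apply Proposition~\ref{for_use} with this fixed $n$ and with $\delta = \epsilon/2$. This produces a constant $M_{\epsilon/2}$ (depending on $n$, and hence on $\epsilon$) such that whenever $\Norm(\fp) > M_{\epsilon/2}$, the prime $\fp$ is of good reduction for $\varphi$ and
\[
\frac{\#\Per(\varphi_\fp)}{\Norm(\fp) + 1} \;\leq\; \FPP(G_n) + \frac{\epsilon}{2} \;<\; \frac{\epsilon}{2} + \frac{\epsilon}{2} \;=\; \epsilon.
\]
Since $\epsilon$ was arbitrary, this establishes \eqref{ze}.

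There is essentially no obstacle here beyond verifying that the hypotheses of Proposition~\ref{for_use} are in force: namely, that $k$ is algebraically closed in $K_n$ for the chosen $n$, which is supplied by the corollary's blanket assumption. The heavy lifting -- the Murty--Scherk Chebotarev bound, the comparison of Galois groups before and after reduction via Proposition~\ref{EGA}, and the observation (Lemma~\ref{S}) that $\#\Per(\varphi_\fp) \leq \#\varphi_\fp^n(\bP^1(\F_q))$ -- has already been absorbed into Proposition~\ref{for_use}, so nothing further is required.
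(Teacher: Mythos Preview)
Your proof is correct and is exactly the argument the paper has in mind; the paper simply declares the corollary an immediate consequence of Proposition~\ref{for_use}, and your $\epsilon/2$ argument spells out that immediacy.
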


\section{Proofs of main theorems}\label{main2}

We will use the following Lemma from \cite{odoni}

\begin{lemma}\label{indi}(\cite[Lemma 4.3]{odoni})
Let $G$ be any transitive group acting faithfully on a finite set
$S$, where $\#S>1$.  Then $\lim_{n \to \infty} \FPP([G]^n) = 0$.  
\end{lemma}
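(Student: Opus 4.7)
The plan is to turn the limit into a one-variable dynamics question by exploiting the recursive structure of the wreath product. Set $p_n := \FPP([G]^n)$ and $s := \#S$. Writing a uniformly random $\sigma \in [G]^n$ in wreath-product notation as $(\pi;\tau_1,\dots,\tau_s)$, with $\pi$ uniform in $G$ and $\tau_1,\dots,\tau_s$ i.i.d.\ uniform in $[G]^{n-1}$, the element $\sigma$ fixes some $(i,j) \in \{1,\dots,s\}\times\{1,\dots,s\}^{n-1}$ if and only if there exists $i \in F(\pi)$ (the fixed set of $\pi$ in $S$) such that $\tau_i$ has a fixed point. Independence of the $\tau_i$ then yields the recursion
\[
p_n \;=\; 1 - \mathbb{E}_\pi\bigl[(1-p_{n-1})^{|F(\pi)|}\bigr] \;=\; g(p_{n-1}),
\]
where $g(x) := 1 - P(1-x)$ and $P(y) := \mathbb{E}_\pi[y^{|F(\pi)|}]$ is the fixed-point generating polynomial of the $G$-action on $S$.

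Next I would show that $g$ maps $[0,1]$ into itself with $g(x) \le x$, so that $\{p_n\}$ is monotone decreasing. One has $P(1)=1$, and Burnside's lemma applied to the transitive action gives $P'(1) = \mathbb{E}[|F(\pi)|] = 1$. Either Jensen's inequality for the convex function $y \mapsto y^k$ (yielding $P(1-x) \ge (1-x)^1 = 1-x$), or equivalently the elementary Bernoulli bound $1-(1-x)^k \le kx$ combined with $\mathbb{E}[|F(\pi)|]=1$, produces $g(x) \le x$ on $[0,1]$; also $g([0,1]) \subseteq [0,1]$ since $P$ takes values in $[0,1]$ on $[0,1]$.

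The main obstacle is upgrading this to a strict inequality $g(x) < x$ on $(0,1]$, since without strictness the sequence could stabilize at a positive value. The key input is that the identity of $G$ contributes $|F(\mathrm{id})| = s \ge 2$, so $|F(\pi)|$ is not almost-surely constant even though it averages $1$; this forces strict Jensen inequality for every $x \in (0,1)$. At $x=1$ one has $g(1) = 1 - \Prob(|F(\pi)|=0)$, and this is strictly less than $1$ because transitivity forces fixed-point-free elements to exist: if every element had a fixed point, the identity's contribution of $s \ge 2$ would push the Burnside average strictly above $1$, a contradiction. Hence $g(x) < x$ on $(0,1]$, and by continuity of $g$, the monotone decreasing sequence $p_n$ converges to the unique fixed point $L = 0$ of $g$ in $[0,1]$, completing the argument.
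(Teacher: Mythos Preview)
Your proof is correct and follows essentially the same route as Odoni's original argument (which the paper merely cites without reproducing): derive the recursion $p_n = g(p_{n-1})$ from the fixed-point generating polynomial $P$, use Burnside to get $g'(0)=1$, and combine non-constancy of $|F(\pi)|$ with the existence of a derangement in a transitive action to obtain $g(x)<x$ on $(0,1]$, forcing $p_n\downarrow 0$. One small slip: the Jensen step should be applied to the convex function $k\mapsto y^k$ for fixed $y\in(0,1]$, not to $y\mapsto y^k$; your alternative via the Bernoulli bound $1-(1-x)^k\le kx$ is correct as stated and already suffices.
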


We are now ready to prove our main theorems on proportions of periodic
points.

\begin{proof}[Proof of Theorem \ref{generic}]
Fix $\epsilon > 0$. By Lemma \ref{indi}, there is an $n$ such that $\FPP([S_d]^n) \leq
\epsilon/2$. Let $R = k[a_0, \dots, a_d, b_0, \dots, b_d]$.  Then, the
general rational function
\[ \varphi(x) = \frac{a_d x^d + \dots + a_0}{b_d x^d + \dots + b_0}\]
gives an equation $h_n(x) = \varphi^n(x) - t = 0$.  Let $D$ be the
ring $k[c_0, \dots c_{d-1}]$ and let $\psi: R[t] \lra D$ be the
homomorphism given by $\psi(a_d) =
1$, $\psi(a_i) = c_i$ for $0 \leq i < d$, $\psi(b_0) = 1$, $\psi(b_j)
= 0$ for $1 \leq j \leq d$, and $\psi(t) = 0$.  Then $\psi$ extends to
a map $\psi_1: R[t][x] \lra D[x]$ such that
$\psi_1(h(x)) = x^d + c_{d-1} x^{d-1} + \dots + c_0$.  By \cite[Theorem
1]{odoni}, we have $\Gal(\psi_1(h_n)/F(D)) \cong [S_d]^n$.  Since
Lemma \cite[Lemma 2.4]{odoni} gives
 \[ \#\Gal(\psi_1(h_n)/F(D)) \leq \#\Gal(h_n(x)/F(R[t])), \]
and $\Gal(h_n(x)/F(R[t]))$ is isomorphic to a subgroup of $ [S_d]^n$,
this means that $\Gal(h_n(x)/F(R[t])) \cong [S_d]^n$.   
Proposition \ref{not-w} then tells us that $F(R)$ is integrally closed in the
splitting field of $h_n(x)$, since otherwise $\Gal(h_n(x)/F(R[t]))$
would be a proper subgroup of $[S_d]^n$ for all $n \geq 2$.  Thus, by Proposition \ref{EGA}, if $V_{d,\epsilon}$
is the set of prime ideals $\fm \in \Spec R$ such that $\Gal( (h_n)_\fp
/ (K)_\fp) \cong [S_d]^n$, then $V_{d,\epsilon}$ is Zariski open in $\Spec R$.  Let
$U_{d,\epsilon} = V_{d,\epsilon} \cap \Rat_d$ where $\Rat_d$ is as in the paragraph above the statement
of Theorem \ref{generic}.  
 
Let $\varphi_{ {\vec a}, {\vec b}} \in U_{d,\epsilon}(k)$.  
    Then by Proposition \ref{for_use}, applied to
$\delta = \epsilon/2$, we have 
\[ \frac{\#\Per(\varphi_{{\vec a}, {\vec b}})}{\Norm(\fp) + 1} \leq \epsilon/2 + \epsilon
/2 = \epsilon,\]
for all sufficiently large $\Norm(\fp)$, and our proof is complete.  
\end{proof}

\begin{lemma}\label{extendbase}
Let $k$ be a number field, let $\varphi \in k[x]$, let $\fp$ be a
prime of good reduction for $\varphi$, and let $k'$ be a finite extension
of $k$. Let $\fq$ be a prime of $k'$ such that $\fq \cap \fo_k =
\fp$ and $[ (\fo_{k'}/ \fq)  : (\fo_{k}/ \fp)]  = 1$.  Then $\varphi$
induces a map ${\tilde \varphi}$ over $k'$ such that ${\tilde \varphi}$ has
good reduction at $\fq$  and we have 
$\#\Per({\tilde \varphi}_\fq) = \# \Per(\varphi_\fp)$.    
\end{lemma}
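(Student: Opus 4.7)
The plan is to verify that $\tilde{\varphi}$ can simply be taken to be $\varphi$ itself, viewed over $k'$ via the inclusion $k \hookrightarrow k'$, and that the residue field hypothesis forces this choice to yield exactly the same reduced dynamical system as $\varphi_\fp$, up to the identification of residue fields.

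First I would observe that since $\fq \cap \fo_k = \fp$, the inclusion $\fo_k \hookrightarrow \fo_{k'}$ sends $\fp$ into $\fq$, and more importantly any element of $\fo_k \setminus \fp$ maps into $\fo_{k'} \setminus \fq$. In particular, an element of $k$ that is a unit at $\fp$ remains a unit at $\fq$, and an element of $k$ that is $\fp$-integral is $\fq$-integral. The good reduction hypothesis for $\varphi$ at $\fp$ amounts to integrality and ``max equals one'' conditions on the coefficients of $\varphi$ (and on $P(\alpha,1), Q(\alpha,1), P(1,\alpha), Q(1,\alpha)$ for $\alpha \in \bar k = \bar{k'}$) with respect to $|\cdot|_\fp$; since $|\cdot|_\fq$ restricted to $k$ is equivalent to $|\cdot|_\fp$, these conditions transfer verbatim. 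Hence $\tilde{\varphi} := \varphi \in k[x] \subseteq k'[x]$ has good reduction at $\fq$.

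Next I would use the residue field equality $[\fo_{k'}/\fq : \fo_k/\fp] = 1$ to conclude that the canonical inclusion $\iota: \fo_k/\fp \hookrightarrow \fo_{k'}/\fq$ is an isomorphism of finite fields, which extends to a bijection $\iota: \bP^1(\fo_k/\fp) \xrightarrow{\sim} \bP^1(\fo_{k'}/\fq)$. Because the coefficients of $\tilde{\varphi}$ are literally those of $\varphi$, their reductions mod $\fq$ correspond under $\iota$ to their reductions mod $\fp$, yielding the intertwining relation
\[
\tilde{\varphi}_\fq \circ \iota \;=\; \iota \circ \varphi_\fp
\]
as self-maps of $\bP^1$. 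Thus $\varphi_\fp$ and $\tilde{\varphi}_\fq$ are conjugate as dynamical systems, so $\iota$ restricts to a bijection between their periodic point sets, giving $\#\Per(\tilde{\varphi}_\fq) = \#\Per(\varphi_\fp)$.

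This argument has no genuine obstacle; it is purely a matter of unwinding the definitions of good reduction and of the reduced map, and observing that the residue field hypothesis forces the two reduced systems to be identified. The only point that requires minor care is confirming that the ``max equals one'' condition for $\alpha \in \bar k$ in the definition of good reduction is insensitive to whether one works with $|\cdot|_\fp$ or with an extension of $|\cdot|_\fq$ to $\bar{k'} = \bar k$, which follows since being a unit is invariant under equivalent absolute values.
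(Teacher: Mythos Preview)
Your proof is correct and follows essentially the same approach as the paper: define $\tilde\varphi$ as $\varphi$ via the inclusion $k\subseteq k'$, note that good reduction at $\fp$ implies good reduction at $\fq$, and use the residue field isomorphism to conjugate $\varphi_\fp$ with $\tilde\varphi_\fq$, whence the periodic-point counts agree. The paper's proof is terser (it simply asserts good reduction at $\fq$ and writes the conjugacy in the other direction via $\sigma=\iota^{-1}$), but the content is identical.
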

\begin{proof}
We let ${\tilde \varphi}$ be the image of $\varphi$ in $k'(x)$ under the
inclusion $k(x) \subseteq k'(x)$.  Then ${\tilde \varphi}$ has
good reduction at $\fq$.  

Since $[ (\fo_{k'}/ \fq) : (\fo_{k}/ \fp)] = 1$, for any $\beta \in
\fo_{k'}$, there is an $\alpha \in \fo_k$ such that $\beta \equiv
\alpha \pmod{\fq}$.  Thus, there is a natural bijection $\sigma:
\bP^1((\fo_{k'}/ \fq)) \lra \bP^1(\fo_{k}/ \fp)$ such that
$\varphi_\fp(\sigma(z)) = \sigma ({\tilde \varphi}_\fq(z))$ for all $z \in
\bP^1((\fo_{k'}/ \fq))$.  Thus, for each $z \in \bP^1((\fo_{k'}/
\fq))$, we see that $z$ is periodic under $\varphi_\fq$ exactly when
$\sigma(z)$ is periodic under $\varphi_\fp$. Hence, we have $\#
\Per({\tilde \varphi}_\fq) = \# \Per(\varphi_\fp)$.
\end{proof}

\begin{lemma}\label{eb2}
Let $k$ be a number field, let $\varphi \in k[x]$, let $\fp$ be a
prime of good reduction for $\varphi$, let $k'$ be a finite extension
of $k$, and let $\tp$ denote the extension of $\varphi$ to
$\bP^1_{k'}$.  Then
\[ \liminf_{\substack{\Norm(\fp) \to \infty \\ \text{primes $\fp$
    of $k$}}} \frac{ \# \Per (\varphi_\fp)}{\Norm(\fp) +1} \leq
  \limsup_{\substack{\Norm(\fq) \to \infty \\ \text{primes $\fq$
    of $k'$}}} \frac{ \# \Per (\tp_\fq)}{\Norm(\fq) +1}\]
\end{lemma}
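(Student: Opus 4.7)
The plan is to produce an infinite family of pairs $(\fp_n,\fq_n)$ with $\fp_n$ a prime of $\fo_k$ and $\fq_n$ a prime of $\fo_{k'}$ over $\fp_n$ satisfying $[(\fo_{k'}/\fq_n):(\fo_k/\fp_n)] = 1$, then invoke Lemma~\ref{extendbase} to identify the proportions of periodic points along this subsequence. The one-sided $\liminf$ bound will then follow from the elementary facts that passing to a subsequence only increases a $\liminf$ and only decreases a $\limsup$.

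To obtain the family, I would apply the Chebotarev density theorem to the Galois closure $L$ of $k'/k$: infinitely many primes of $\fo_k$ split completely in $L$, so in particular completely in $k'$. Pick an enumeration $\{\fp_n\}$ of such primes with $\Norm(\fp_n)\to\infty$ and, for each $n$, a prime $\fq_n$ of $\fo_{k'}$ over $\fp_n$; then $\Norm(\fq_n) = \Norm(\fp_n)$. After discarding the finitely many primes of bad reduction for $\varphi$ (which are automatically the primes whose extensions to $k'$ are of bad reduction for $\tp$), Lemma~\ref{extendbase} gives
\[
\frac{\#\Per(\varphi_{\fp_n})}{\Norm(\fp_n)+1} = \frac{\#\Per(\tp_{\fq_n})}{\Norm(\fq_n)+1}
\]
for every $n$.

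To finish, I would chain the inequalities
\begin{align*}
\liminf_{\Norm(\fp)\to\infty} \frac{\#\Per(\varphi_\fp)}{\Norm(\fp)+1}
&\leq \liminf_n \frac{\#\Per(\varphi_{\fp_n})}{\Norm(\fp_n)+1}\\
&= \liminf_n \frac{\#\Per(\tp_{\fq_n})}{\Norm(\fq_n)+1}\\
&\leq \limsup_n \frac{\#\Per(\tp_{\fq_n})}{\Norm(\fq_n)+1}\\
&\leq \limsup_{\Norm(\fq)\to\infty} \frac{\#\Per(\tp_\fq)}{\Norm(\fq)+1},
\end{align*}
where the first and last use that $\{\fp_n\}$ and $\{\fq_n\}$ are subsequences of the respective sets of primes, the middle equality is the per-term identification above, and the inner inequality is the trivial $\liminf\leq\limsup$ on a single sequence.

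The only substantive input is the existence of the subsequence $\{\fp_n\}$, which is immediate from the easy direction of Chebotarev (or from more elementary density results for completely-split primes in a finite separable extension); every other step is purely formal, so I do not anticipate a serious obstacle.
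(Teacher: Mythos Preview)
Your proposal is correct and follows essentially the same route as the paper: use Chebotarev to produce infinitely many primes of $k$ that split completely in $k'$, apply Lemma~\ref{extendbase} to equate the proportions over matched primes, and chain the obvious subsequence inequalities. The only cosmetic differences are that the paper works with the full set $\cP$ of such primes (and all $\fq$ above them) rather than a chosen sequence of pairs, and compresses your two middle steps into one.
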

\begin{proof}
  There is a positive proportion of primes $\fp$ in $k$ such that $\fp
  \fo_{k'}$ factors as a product of distinct primes $\fq$ such that
  $[ (\fo_{k'} / \fq): (\fo_k / \fp)] = 1$, by the Chebotarev density
  theorem for number fields (see \cite{Cheb, LS}).  Let $\cP$ be the
  set of all such primes at which $\varphi$ has good reduction.  Let
  $\cP'$ be set of primes $\fq$ of $k'$ such that $\fq | \fp$ for some
  $\fp \in \cP$.  Then, by Lemma \ref{extendbase}, we have 
\begin{equation*}
\begin{split}
\liminf_{\substack{\Norm(\fp) \to \infty \\ \text{primes $\fp$
    of $k$}}} \frac{ \# \Per (\varphi_\fp)}{\Norm(\fp) +1} & \leq
\liminf_{\substack{\Norm(\fp) \to \infty \\ \fp \in \cP}} \frac{ \#
  \Per (\varphi_\fp)}{\Norm(\fp) +1}   \\
 & = \limsup_{\substack{\Norm(\fq) \to \infty \\ \fq \in \cP'}}
 \frac{ \# \Per (\tp_\fq)}{\Norm(\fq) +1} \\
& \leq \limsup_{\substack{\Norm(\fq) \to \infty \\ \text{primes $\fq$
    of $k'$}}} \frac{ \# \Per (\tp_\fq)}{\Norm(\fq) +1},
\end{split}
\end{equation*} 
as desired.
\end{proof}

We now prove Theorem \ref{collide}. Recall that $K_n$ denotes the
splitting field of $\varphi^n(x) - t$ over $k(t)$.
 
\begin{proof}[Proof of Theorem \ref{collide}]
  Let $E$ denote the algebraic closure of $k$ in $K_1$ and let $G$ be
  the Galois group $\Gal( (\varphi(x) - t) / E(t) )$.  Let $I$ be the
  subgroup of $G$ generated by the ramification groups of the
  critical points.  Then $I=G$ by Remark \ref{all critical points}.  Thus, we have $\Gal( (\varphi^n(x) - t) / E(t) )
  \cong [G]^n$ for all $n$ by Theorem \ref{main theorem}. Thus, by
  Corollary \ref{zero},
\[ \lim_{\substack{ \Norm(\fq) \to \infty \\ \text{$\fq$ a prime of
      $E$} }} \frac{\#\Per( {\tilde \varphi}_\fq)} {\Norm(\fq) + 1}
= 0,\]
 and Lemma \ref{eb2} then implies (a).  If $k$ is
algebraically closed in $K_1$, then $k=E$ and (b) follows from
Corollary \ref{zero}.  
\end{proof}

\begin{proposition}\label{uni}
Let $k$ be a number field, let $d >1$, and let $f(x) = x^d + c \in k[x]$ have the
property that 0 is not preperiodic.  Then
\begin{enumerate} 
\item[(a)] \[ \liminf_{\fp \to \infty} \frac{\#\Per(f_\fp)}{\Norm(\fp) + 1}
  = 0;\] 
\item[(b)] if $k$ contains a primitive $d$-th root of unity, we have
 \[\lim_{\fp \to \infty} \frac{\#\Per(f_\fp)}{\Norm(\fp) + 1}
  = 0.\]
\end{enumerate}
\end{proposition}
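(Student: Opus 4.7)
The plan is to invoke Theorem \ref{collide} applied directly to $f(x) = x^d + c$. Since $f'(x) = d x^{d-1}$ vanishes only at $x = 0$, the unique finite critical point of $f$ is $0$. The hypothesis of Theorem \ref{collide} on critical orbit collisions reduces for such an $f$ to the single requirement $f^m(0) \neq f^n(0)$ whenever $m \neq n$, which is exactly the standing assumption that $0$ is not preperiodic. (The other critical point $\infty$ is a fixed point of the polynomial, and it never collides with the forward orbit of $0$ since $f^m(0) \in k$ for every $m$, so it contributes no obstruction to the hypotheses as used in the proof of Theorem \ref{collide}, via Remark \ref{all critical points}.) Part (a) is then immediate from Theorem \ref{collide}(a).

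For part (b), I need to verify that $k$ is algebraically closed in $K_1$, the splitting field of $f(x) - t$ over $k(t)$, under the assumption that $k$ contains a primitive $d$-th root of unity $\zeta_d$. Pick any root $y$ of $x^d - (t-c) = 0$; the remaining roots are $\zeta_d^i y$ for $i = 0, \dots, d-1$, so $K_1 = k(\zeta_d, y)$. When $\zeta_d \in k$, this simplifies to $K_1 = k(y)$. From the relation $t = y^d + c$ we see that $k(y) \supseteq k(t)$ with $[k(y):k(t)] = d$, whence $y$ is transcendental over $k$ and the extension $k(y)/k$ is purely transcendental. It follows that the algebraic closure of $k$ inside $K_1 = k(y)$ is $k$ itself. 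Theorem \ref{collide}(b) then yields the full limit.

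The whole argument is essentially bookkeeping against the hypotheses of Theorem \ref{collide}; there is no genuinely hard step. The only thing to be careful about is the splitting-field computation for part (b) — specifically, recognising that $K_1 = k(\zeta_d)(y)$ is a rational function field over $k(\zeta_d)$, so the obstruction to $k$ being algebraically closed in $K_1$ is exactly the obstruction to $\zeta_d$ lying in $k$.
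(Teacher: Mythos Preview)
Your proposal is correct and takes essentially the same approach as the paper: both arguments reduce to Theorem~\ref{main theorem} applied with $S=\{0\}$ over $k(\xi_d)$, followed by Corollary~\ref{zero} and Lemma~\ref{eb2}. The paper unwinds this directly (identifying $K_1 = k(\xi_d)(\sqrt[d]{t-c})$ and $G\cong C_d$ generated by inertia at $t-c$), whereas you route through Theorem~\ref{collide} and its proof via Remark~\ref{all critical points}; your parenthetical about the fixed critical point $\infty$ is exactly the right observation needed to justify that shortcut.
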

\begin{proof}
Let $k' = k(\xi_d)$ where $\xi_d$ is $d$-th roof unity, and let
${\tilde f}$ the extension of $f$ to $\bP^1_{k'}$.  Then
the splitting field of ${\tilde f(x)} - t$ over $k(t)$ is simply
$k'(t)(\sqrt[d]{t - c})$ which has degree $d$ over $k'(t)$ and ramifies completely over $t-c$; thus,
the Galois group is generated by the ramification group over $t-c$.
Since the critical point 0 is not preperiodic, we see then that the
conditions of Theorem \ref{main theorem} are met for all $N$.  Thus,
for any $n$, we have   $\Gal( ({\tilde f^n
(x)} - t ) / k'(t))
\cong [C_d]^n$ where $C_d$ is the cyclic group of order $d$.  Thus, by
Corollary \ref{zero} and Lemma \ref{indi}, we have
\[ \lim_{\substack{ \Norm(\fq) \to \infty \\ \text{$\fq$ a prime of
      $k'$} }} \frac{\#\Per( {\tilde f}_\fq)} {\Norm(\fq) + 1}
= 0.\]
Since $k' = k$ if $k$ contains a primitive $d$-th root of unity, (b)
follows immediately from Corollary \ref{zero}; likewise, (a) follows
from Corollary \ref{zero} and Lemma \ref{eb2}.  
\end{proof}

\begin{theorem}\label{uni2}
Let $k$ be a number field, let $d >1$, and let $f(x) = x^d + c \in
k[x]$.  Then 
\[ \liminf_{\fp \to \infty} \frac{\#\Per(f_\fp)}{\Norm(\fp) + 1}
  = 0\]
unless $f$ is the Chebyshev polynomial $x^2 - 2$.   
\end{theorem}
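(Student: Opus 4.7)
The plan is to split on whether the unique finite critical point $0$ of $f(x) = x^d + c$ is preperiodic. If $0$ is not preperiodic, then Proposition \ref{uni}(a) immediately gives $\liminf_{\fp} \#\Per(f_\fp)/(\Norm(\fp) + 1) = 0$ and we are done. Hence we may assume $0$ is preperiodic, so there exist smallest $0 \leq N_0 < M_0$ with $f^{N_0}(0) = f^{M_0}(0)$. By Lemma \ref{eb2} we may freely replace $k$ by $k' = k(\xi_d)$, where now $K_1 = k'(t)(\sqrt[d]{t-c})$ is cyclic of degree $d$ and $G := \Gal(K_1/k'(t)) \cong C_d$ is generated by the inertia group over $t = c$. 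Theorem \ref{main theorem}, applied with $S = \{0\}$, then yields $G_n \cong [C_d]^n$ for every $n < N_0$, and the work is to understand $G_n$ for $n \geq N_0$.

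The key step is a \emph{bounded index} claim: provided $(d,c) \neq (2,-2)$, the index $[\,[C_d]^n : G_n\,]$ is bounded by a constant independent of $n$. Granting this, the elementary estimate $\FPP(H) \leq [G:H]\cdot\FPP(G)$ for $H \leq G$, combined with Lemma \ref{indi}, gives $\FPP(G_n) \to 0$. Proposition \ref{for_use} applied over $k'$ yields $\lim_{\Norm(\fq) \to \infty}\#\Per(\tilde f_\fq)/(\Norm(\fq)+1) = 0$ for primes $\fq$ of $k'$, and Lemma \ref{eb2} transports the conclusion back to $k$. The bounded index claim should be proved by an inductive analysis mirroring the proof of Theorem \ref{main theorem}: at each level $n \geq N_0$, the roots $\alpha_i$ of $f^n(x) - t$ whose image $f(\alpha_i)$ lies \emph{outside} the terminal postcritical cycle still contribute disjoint, totally ramified $C_d$-factors to $G_{n+1}$ over $G_n$, by the same Abhyankar-type argument as in Lemma \ref{disjoint ramification}; the failure of disjointness in Lemma \ref{disjoint ramification}(1) occurs only when the critical orbit revisits an earlier value, and for a unicritical polynomial this happens in a controlled pattern governed by the finite tail-plus-cycle of $\{f^j(0)\}$. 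One then estimates the proportion of roots of $f^n(x)-t$ that lie over values outside the postcritical cycle: since the postcritical set has bounded size $M_0$ but $f^n$ has $d^n$ preimages, this proportion tends to $1$, so a positive density of the full wreath product $[C_d]^{n+1}$ is realized at each step, and the cumulative loss is bounded.

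The main obstacle---and the place where the exceptional polynomial $x^2 - 2$ genuinely appears---is to verify this bounded loss quantitatively, and to show that it fails exactly for Chebyshev. For $f = x^2 - 2$, the substitution $x = y + y^{-1}$ converts $f$ into the squaring map $y \mapsto y^2$, so every iterate $f^n(x) - t$ factors in a way that pairs preimages $y \leftrightarrow y^{-1}$; this forces the Galois group to sit inside an index-$2^n$ subgroup of $[C_2]^n$, leading to the positive liminf computed in Example \ref{Cheby}. The task is therefore to rule out any analogous rigidity for the other preperiodic parameters: for $d \geq 3$ this is essentially automatic, since a unicritical polynomial of degree $\geq 3$ cannot be conjugate to a Dickson polynomial (those have two critical points), so no nontrivial symmetry can collapse the wreath at a positive rate; for $d=2$ and $c \neq -2$, one shows by a direct examination of the postcritical combinatorics---using that the only Misiurewicz parameter giving a ``Chebyshev-like'' pairing of preimages is $c = -2$---that the tail of the critical orbit always yields at least one fresh unramified step, keeping $[[C_2]^n : G_n]$ bounded. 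This case analysis, together with the inductive ramification argument above, completes the proof.
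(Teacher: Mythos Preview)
Your reduction to the PCF case via Proposition \ref{uni} is fine, as is the use of Lemma \ref{eb2} to pass to $k(\xi_d)$. The difficulty is the ``bounded index'' claim, which is false. Consider $f(x) = x^d$ (so $c = 0$): here $0$ is a fixed critical point, $f^n(x) - t = x^{d^n} - t$, and the geometric Galois group $G_n$ is cyclic of order $d^n$, while $|[C_d]^n| = d^{(d^n-1)/(d-1)}$; the index is $d^{(d^n-1)/(d-1) - n}$, which tends to infinity. The same phenomenon occurs for $f(x) = x^2 - 1$ (the basilica), where $0$ has period $2$ and the iterated monodromy group has unbounded index in $[C_2]^n$. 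Your own heuristic already shows this: if at step $n$ only a proportion $\rho < 1$ of the $d^n$ roots $\alpha_i$ yield a full $C_d$-factor, then $|G_{n+1}|/|G_n| \leq d^{\rho d^n}$ while $|[C_d]^{n+1}|/|[C_d]^n| = d^{d^n}$, so the index grows by a factor of at least $d^{(1-\rho)d^n}$, and no amount of ``positive density'' keeps the cumulative index bounded. Hence the inequality $\FPP(G_n) \leq [\,[C_d]^n : G_n\,] \cdot \FPP([C_d]^n)$ gives nothing.

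The paper handles the PCF case by a completely different route: it invokes Jones's theorem on fixed-point-free elements of iterated monodromy groups (\cite[Theorem 1.1]{Jones}), which says that for a PCF polynomial $f$ one has $\FPP(\Gal((f^n(x)-t)/\bC(t))) \to 0$ unless $f$ is conjugate to $\pm T_d$ or has a fixed point whose other preimages are all critical. One then checks (as you essentially did) that neither exception occurs for $x^d + c$ unless $d=2$ and $c=-2$. Jones's result is proved by complex-analytic methods (iterated monodromy groups as in Nekrashevych's theory), not by ramification arguments, and it controls $\FPP(G_n)$ directly without any comparison to the full wreath product. The descent from $\bC(t)$ to $k(t)$ then goes through the algebraic closure $k_n$ of $k$ in $K_n$, Chebotarev for number fields, and Proposition \ref{for_use}, much as you outlined. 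So the missing ingredient in your argument is precisely this external input; the algebraic ramification machinery of Section \ref{proof of theorem} by itself does not suffice in the PCF case.
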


\begin{proof}
 If 0 is not
preperiodic under $f$ then the desired result follows immediately from
Proposition \ref{uni}.  

If 0 is preperiodic for $\sigma^{-1}f\sigma$, then $f$ is post-critically finite; that is, every
critical point of $f$ is preperiodic. By \cite[Theorem 1.1]{Jones}, we
must then have $\lim_{n \to \infty} \FPP( \Gal( (f^n(x) - t) / \bC(t) ))
= 0$ unless either (a) $f$ is conjugate to $\pm T_d$, where $T_d$ is
a Chebyshev polynomial of degree $d$ or (b) there is a fixed point
$\alpha \in \bC$ of $f$ such that $f^{-1} (\alpha) \setminus\{ \alpha
\}$ is a nonempty set of critical points of $f$.  It is clear that (b)
cannot happen for maps of the form $x^d + c$, since the inverse image
of any point contains either a single critical point or more than one
point that is not critical.  Furthermore, when $d > 2$, no $\pm T_d$
can be conjugate to $x^d + c$, since the derivative of $\pm T_d$
cannot be a perfect $(d-1)$ power (since, for example, $\pm T_d'$ has
a nonzero term of degree $d-3$ but no term of degree $d-2$).  In the
case where $d=2$, the only conjugate of $\pm T_d$ that has the form $x^2 +
c$ is $x^2 - 2$ (see \cite[Corollary 1.3]{Jones}).   

Now, assume that $f(x) \not= x^2 - 2$.  
Then, from above, we see that if $L_n$ is the splitting field of $f^n(x) - t$ over
$\bC(t)$ and $G_n = \Gal(L_n/ \bC(t))$, then $\FPP(G_n)$ goes to zero as $n$ goes to infinity. Now, let
$K_n$ be the splitting field of $f^n(x) - t$ over $k(t)$ and let $k_n$
be the algebraic closure of $k$ in $K_n$.  Since $K_n$ and $\bC(t)$
are disjoint over $k_n(t)$ we see that every action of $G_n$ on the
roots of $f^n(x) - t$ restricts to a unique action of $\Gal(K_n /
k_n(t))$ on the roots of $f^n(x) - t$.  For each $k_n$, there is a
positive proportion of primes $\fp$ in $k$ such that $\fp \fo_{k_n}$
factors as a product of distinct primes $\fq$ such that $\fo_{k_n} /
\fq = \fo_k / \fp$ by the Chebotarev density theorem for number fields
(see \cite{Cheb, LS}). Let $U_n$ be the set of all such primes.

Choose any $\epsilon> 0$.  Then there is some $n$ such that the
proportion of fixed point elements in $G_n$ is less than $\epsilon/2$.
Then, using Proposition \ref{for_use} with $\delta = \epsilon/2$, we
see that for all sufficiently large $\fq$, the proportion of periodic
points for $f_\fq$ is at most $\epsilon$.  Thus, there is an element of
$\fq \in U_n$ such that the proportion of periodic points for $f_\fq$
is at most $\epsilon$.  Letting $\fp = \fq \cap \fo_k$, the
proportion of periodic points for $f_\fp$ is at at most $\epsilon$, by
Lemma~\ref{extendbase}.  So
we have
\[ \liminf_{\Norm(\fp) \to \infty} \frac{\#\Per(f_\fp)}{\Norm(\fp) + 1} = 0,\]
as desired.

\end{proof}

We can now prove Theorem \ref{quad} quite easily. 

\begin{proof}[Proof of Theorem \ref{quad}]
We choose a linear
polynomial $\sigma = ax + b \in k[x]$ such that $(\sigma^{-1} f \sigma)(x) = x^2 + c$
for some $c \in k$.  Since $\sigma$ is an automorphism of $\fo_k /
\fp$ for all but at most finitely many primes $\fp$ of $\fo_k$, it follows
that for all but at most finitely many primes $\fp$, we have
$\#\Per(f_\fp) = \#\Per ( (\sigma^{-1} f \sigma)_\fp )$.  The result
now follows immediately from Theorem \ref{uni2}.
\end{proof}

\begin{remark}
We note that the techniques Jones \cite{Jones2} uses to control $\Gal(
( (f^n(x) - t) / \bC(t) ))$, for $f$ a post-critically finite
polynomial, are completely different than the wreath
product techniques used here.  Whereas the wreath product techniques here
are mostly algebraic (relying on disjointness of ramification in field
extensions), Jones relies on the complex-analytic theory of iterated monodromy groups  
\end{remark}

\section{Examples} \label{examples}

We end with a discussion of how proportions of periodic points behave
for powering maps, Chebyshev, and Latt\'es maps as we vary over primes
in $\bZ$.  Note that Manes and Thompson \cite{MT} have previously
analyzed periodic points for Chebyshev maps in $\F_{p^n}$ as $n$ goes
to infinity.  In these examples, we provide a mostly elementary
analysis, with no estimates of proportions of fixed-point elements for
iterated Galois groups; for a more Galois theoretic discussion of related
issues, see \cite{Jones}.

\begin{example} \label{powering} Let $f(x) = x^d$.  Let $k$ be a
  number field.  By the Chebotarev density theorem for number fields,
  for any $m$ there are infinitely many primes $\fq$ of $\fo_k$ such
  that $d^m$ divides $q-1$ where $\F_q$ is the residue field $\fo_k /
  \fq$.  For each such prime $f_\fq^m$ is a $d^m$-to-one map on $(\F_q)^*$ so the
  proportion of periodic points is at most $1/d^m + 2 / (q+1)$ (the
  two comes from the fact that 0 and $\infty$ are periodic).  Thus, we
  see that 
\[\liminf_{\Norm(\fq) \to \infty}
  \frac{\#\Per(f_\fq)}{\Norm(\fq) + 1} = 0.\]
\end{example}

\begin{example}\label{Cheby}
  Let $f$ be a Chebyshev polynomial satisfying $f(x + \frac{1}{x}) =
  x^d+ \frac{1}{x^d}$.  Here we work only over $\bQ$.
  We can give an elementary description of the asymptotic behavior of
  the proportion of periodic points for $f_p$; it depends very much on
  whether or not $d$ is a prime power.

Define $\pi(x) = x + \frac{1}{x}$ and $g(x) = x^d$.  Then we have
\[\begin{CD}
     \bP^1  @>g>> \bP^1\\
   @V{\pi}VV          @V{\pi }VV \\
    \bP^1  @>f>> \bP^1\\
\end{CD} \]

Take any $\alpha \in \bP^1(\F_p)$.   Let $\beta\in\bP^1(\F_{p^2})$ such that  $\pi(\beta) = \alpha$.   We
will show that $\beta$ is $g$-periodic if and only if $\alpha$ is
$f$-periodic.   If
$\beta$ is $g$-periodic, then $g^m(\beta) = \beta$ so $f^m(\alpha) =
\alpha$, so $\alpha$ must be $f$-periodic.   Conversely, suppose that
$\alpha$ is $f$-periodic.  If $\beta$ equals 0 or $\infty$, then
$\alpha$ is $\infty$ so both $\alpha$ and $\beta$ are periodic. Suppose $\beta \not= 0, \infty$.  If $f^m(\alpha) = \alpha$ for some $m$, then
$\pi(g^m(\beta)) = \alpha$ for some $m$, so $g^m(\beta) = \beta$ or
$g^m(\beta) = 1 / \beta$. If  $g^m(\beta) = \beta$, then $\beta$ is
obviously $g$-periodic; if $g^m(\beta) = 1/\beta$, then $\beta^{d^m} =
1/\beta$ so $(1/\beta)^{d^m} = \beta$ so $g^{2d}(\beta) = \beta$ so
$\beta$ is still periodic.
 
Let $U$ be the set of $z \in \F^*_{p^2}$ such that $\pi(z)
\in \F_p$.  We see that if $z \in U$ and $z \notin \F_p$, then $z$ and
$1/z$ are the roots of the quadratic polynomial $T^2 - (z + 1/z)T +
1$, so $z$ and $1/z$ are conjugate over $\F_p$.  Hence, we have $z^p =
1/z$ so $z^{p+1} = 1$.  Thus, we see that $U = (\F_p)^* \cup U_{p+1}$ where
$U_{p+1}$ is the set of points in $\F^*_{p^2}$ whose order divides
$p+1$.  The elements of $U$ that are $g$-periodic are simply the ones whose
order is coprime to $d$.

When $d$ is a power of an odd prime, either $p+1$ or $p-1$ is prime to
$d$, so we obtain at least $p-1$ $g$-periodic points.  Since $\pi$ is
two-to-one at all but two points of $U$, we see immediately that $\lim
\inf_{p \to \infty} \#\Per(f_p) / p \geq 1/2$.  Now, there are $p$
such that $p - 1 \equiv 1 \pmod{d^r}$, for any positive integer $r$ by
the Dirichlet theorem for primes in arithmetic progressions (which may
be regarded as a special
case of the Chebotarev density theorem for number fields), so the
proportion of $g$-periodic points in $\F_p^*$ can be made as small as
desired. Thus, we have
\[\liminf_{p \to \infty} \frac{\#\Per(f_p)}{p} =  1/2.\]
 
Suppose that $d$ is a power of 2.  Then at least one of $p-1$ and $p+1$ is not
divisible by 4.  Arguing as in the case of odd prime powers (only with
2 dividing both $p-1$ and $p+1$ for $p > 2$), we see that
$\liminf_{p \to \infty} \#\Per(f_p) / p \geq 1/4$.  For any $r$,  there are
infinitely many $p$ such that $p \equiv 1 \pmod{2^r}$, again by
Dirichlet's theorem on primes in arithmetic progressions.  For such primes,
half of the elements of $U_{p+1}$ are $g$-periodic and at most $1/2^r$
points in $\F_p^*$ are $g$-periodic, so we thus obtain
\[\liminf_{p \to \infty} \frac{\#\Per(f_p)}{p} =  1/4.\]

When $d$ has at least two distinct prime factors $\ell_1$ and
$\ell_2$, things are very different.  For
any $r$, we may find $p$ such that $p \equiv 1 \pmod{\ell_1^r}$ and $p
\equiv -1 \pmod{\ell_2^r}$.  Then the proportion of periodic points in
$\F_p^*$ is at most $1/\ell_1^r$ and the proportion of periodic points
in $U_{p+1}$ is at most  $1/\ell_2^r$.  Hence, we see in this case that 
\[\liminf_{p \to \infty} \frac{\#\Per(f_p)}{p} =  0.\]

\end{example} 

\begin{example}
Let $\ell$ be a prime and let $f(x)$ be a Latt\`es map induced by the multiplication-by-$\ell$ map on
an elliptic curve $E$, say defined over $\bQ$.   We will show that in
many cases, we must have
\[\liminf_{p \to \infty} \frac{\#\Per(f_p)}{p} =  0.\]

The argument here is quite similar to that of Example \ref{Cheby},
though the details are more complicated.   Given a
multiplication-by-$d$ (which we denote as $[d]$)  on an
elliptic curve $E$, we have Latt\`es map.  
\[\begin{CD}
     E  @> [d] >> E\\
   @V{\pi}VV          @V{\pi }VV \\
    \bP^1  @>f>> \bP^1\\
\end{CD} \]
The projection $\pi$ here comes from the inclusion of the fixed field of the
elliptic involution $[-1]$ into the function field of $E$.   When $E$
is in Weierstrass form $y^2 = g(x)$, we have simply  $\pi(x,y) = x$.  

We now assume that $d=\ell$ is a prime; letting
$\Gal(\overline{\bQ}/\bQ)$ act on the Tate module $T_{\ell}(E)$ we obtain
a homomorphism $\rho_{\ell} : \Gal(\overline{\bQ}/\bQ) \to
GL_{2}(\bZ_\ell)$.  We further assume that $\ell$ is chosen so that $\rho_\ell$
surjects onto $GL_{2}(\bZ_\ell)$; for $E$ fixed (and without complex
multiplication) this holds for all but finitely many prime $\ell$ by
Serre's celebrated open image theorem (see \cite{SerreOpen}).  

Given a prime $p$, let $F_{p} = \rho_\ell(\Frob_{p})$ denote the image of
the Frobenius conjugacy class $\Frob_{p}$ in 
$GL_{2}(\bZ_\ell)$.  Given $k \in \bZ^{+}$, the Chebotarev
density theorem together with the surjectivity of $\rho_\ell$ implies 
that we have
$$
\sigma_p \equiv \begin{pmatrix} 1 & 0 \\ 0 & -1  \end{pmatrix}
\mod \ell^{k} 
$$
for some $\sigma_p \in  \rho_\ell(\Frob_{p})$
for a positive proportion of primes $p$.  For such $p$, the group $E(\F_p),$
viewed as an abelian group, contains a subgroup $H_{1} \simeq
\bZ/\ell^{k}\bZ$ on which the induced Frobenius action is trivial.
Furthermore, since $\sigma_{p}^{2}$ is congruent to the identity
matrix modulo $\ell^{k}$, there exists a subgroup 
$H_{2} \simeq \bZ/\ell^{k}\bZ$ contained in $E(\F_{p^2})$ such that the
induced Frobenius action on $H_{2}$ is given by multiplication by $-1$.
In order to analyze the action of $f_{p}$ on $\bP^1(\F_p)$, let
$$
S_{1} = \{ x \in \F_p : \text{ $x^{3}+ax+b$ is a quadratic residue mod $p$} \}
$$
and let $S_{2}  = \F_p \setminus S_{1}$  denote the complement.  

We begin with the $f_{p}$-periodic points in $S_{1}$.   With
$G_{1}$ denoting the group of $\F_p$-points on $E$, we note
that $\pi^{-1}(S_{1}) \cup \infty = G_{1}$.  
By
\cite[Proposition~6.52]{silverman-arithmetic-dynamics-book})
$\Per_{n}(f)
= \pi(E[\ell^n-1]) \cup \pi(E[\ell^n+1])$
hence it
is enough to show that $G_{1}$ has small intersection with the union
of $ E[\ell^n\pm 1]$ for all $n$.  When we do this, let $H_{1}'$ denote the
maximal cyclic group of order $\ell^{k_{1}}$ such that 
%
$H_{1}' \subset G_{1}$; we then have $k_{1} \geq k$ and
we also note that there is a projection $G_{1}
\twoheadrightarrow H_{1}'$. Moreover, since $\ell$ is coprime to
$\ell^{n}\pm 1$ for all $n$, the intersection
$$
G_{1} \cap (\cup_{n \geq 1} (\pi(E[\ell^n-1]) \cup \pi(E[\ell^n+1])))
$$
is contained in the kernel of $G_{1} \twoheadrightarrow H_{1}'$.
Consequently the proportion of $f_{p}$-periodic point in $S_{1}$ is at
most $(1+o(1))/\ell^{k}$, as $p \to \infty$.

We next consider the proportion of $f_{p}$-periodic points in $S_{2}$.
Since
$\pi^{-1}(S_{2})$ is contained in the subgroup 
$$
G_{2} :=   \{ P \in E(\F_{p^2}) : P+\Frob_p(P) = 0 \}
$$
(if $x \in S_{2}$ and $y^{2} = x^{3} +ax+b$ then $\Frob_p(y)=-y$) and
$\pi^{-1}(S_{2}) \cup E[2](\F_{p^2}) = G_{2}$ (i.e., they have
essentially the same cardinality) we may argue as before by bounding
the intersection $G_{2} \cap (\cup_{n \geq 1} (\pi(E[\ell^n-1]) \cup
\pi(E[\ell^n+1])))$.  With $H_{2}'$ denoting the maximal cyclic group of
order $\ell^{k_{2}}$ such that $H_{2}' \subset G_{2}$; we
again have $k_{2} \geq k$ and a projection $G_{2}
\twoheadrightarrow H_{2}'$.

Arguing as before we find that the proportion of $f_{p}$-periodic point in
$S_{2}$ is at most $(1+o(1))/\ell^{k}$, as $p \to \infty$.

By the Weil bounds $\# S_{1} = p/2 + O(\sqrt{p})$, hence $\# S_{2} = p/2
+ O(\sqrt{p})$, and we find that the proportion of $f_{p}$-periodic points
$x \in \F_p$ is at most $(1+o(1))/\ell^{k}$, as $p \to \infty$.  
Since $k$ might be taken arbitrarily large, we find that
\[\liminf_{p \to \infty} \frac{\#\Per(f_p)}{p+1} =  0.\]

We end by remarking that $\rho_\ell$ being surjective is a much stronger
assumption than needed --- we only require that the image contains
a sequence of elements $g_{i} \to h_{i}$ (in the $\ell$-adic norm), where
each $h_{i} \in GL_{2}(\bZ_\ell)$ is $\bZ_{\ell}$-conjugate to $J
:= \begin{pmatrix} 1 & 0 \\ 
  0 & -1 \end{pmatrix}$.
For instance, if $\ell$ is odd and $\#(E[\ell] \cap
E(\bQ)) = \ell$, the image of $\rho_\ell$ is much smaller than
$GL_{2}(\bZ_{\ell})$, but it still contains an element $M \in
GL_{2}(\bZ_{\ell})$ which, modulo $l$ is conjugate to $J$. (Since there
is $\ell$-torsion defined 
over $\bQ$, the reduction modulo $\ell$ fixes an $\F_\ell$-line, and the
composition of $\rho_\ell$ with the determinant  surjects onto
$\bZ_{\ell}^{\times})$.  Now, $M$ being conjugate  (modulo $\ell$) to a
diagonal matrix whose eigenvalues are distinct modulo $\ell$ implies a
$\bZ_\ell$-conjugacy $M \sim M' = \begin{pmatrix} \lambda_{1} & 0 \\ 0 &
  \lambda_{2} \end{pmatrix}$ where $\lambda_{1} \equiv 1 \mod \ell$ and
$\lambda_{2} \equiv -1 \mod \ell$.  Since $M$ is in the image, so is
$M^{\ell^{k}}$, and we clearly have $(M')^{l^{k}} \to J$ as $k\to \infty$ (in
the $\ell$-adic metric).

\end{example}

\newcommand{\etalchar}[1]{$^{#1}$}
\providecommand{\bysame}{\leavevmode\hbox to3em{\hrulefill}\thinspace}
\providecommand{\MR}{\relax\ifhmode\unskip\space\fi MR }
\providecommand{\MRhref}[2]{%
  \href{http://www.ams.org/mathscinet-getitem?mr=#1}{#2}
}
\providecommand{\href}[2]{#2}

\end{document}